\newcommand\blfootnote[1]{%
  \begingroup
  \renewcommand\thefootnote{}\footnote{#1}%
  \addtocounter{footnote}{-1}%
  \endgroup
}
\newtheorem*{rem}{Remark}
\newtheorem{prop}{Proposition}
\newtheorem{lem}[prop]{Lemma}
\theoremstyle{definition}\newtheorem{defn}[prop]{Definition}
\newtheorem{thm}[prop]{Theorem}
\theoremstyle{definition} 
\newcommand{\sslash}{\slash \!\!\slash}
\newcommand{\Spec}{\mathrm{Spec}\,}
\title{The finite generation ideal for Daigle \& Freudenburg's counterexample to Hilbert's fourteenth problem}
\author{Simon Hart, University of York, UK, s.hart@york.ac.uk}
\begin{document}

\maketitle

\begin{abstract}
    We compute the finite generation ideal for Daigle and Freudenburg's counterexample to Hilbert's fourteenth problem. This ideal helps to understand how far the ring of invariants is from being finitely generated.  
    Our calculations show that the finite generation ideal is the radical of an ideal generated by three infinite families of invariants. 
    We show that these three families together with an additional invariant form a SAGBI-basis. 
    We use the properties of our SAGBI-basis in our computation of the finite generation ideal. \blfootnote{Keywords: Invariant Theory, Hilbert's Fourteenth Problem, Locally Nilpotent Derivations, Gr\"obner basis, SAGBI-basis}
    
\end{abstract}
\section{Introduction}
Let $\mathbb{K}$ be a field, and let $\mathbb{K}[x_1,x_2,\dots,x_n]$ be a polynomial ring in $n$ variables over $\mathbb{K}$, with $\mathbb{K}(x_1,x_2,\dots,x_n)$ its field of fractions and $L$ a subfield of $\mathbb{K}(x_1,x_2,\dots,x_n)$. In his fourteenth problem, Hilbert asked whether the subalgebra $L \cap \mathbb{K}[x_1,x_2,\dots,x_n]$ is finitely generated. In characteristic zero, this has been shown to not always be the case, with Nagata finding the first counterexample in 1959 \cite{NAG}. Further examples have been found, for example by Roberts in 1990, \cite{ROBS}. In 1994, \citeauthor{ACA} showed that Roberts' example arises as the invariant ring of a $\mathbb{G}_a$-action on $\mathbb{A}^7$  in \cite{ACA}. The smallest known counterexample to Hilbert's problem which arises as an invariant ring was found by Daigle and Freudenburg in dimension five, \cite{DAI}. Daigle and Freudenburg's example arises as an invariant ring of the following $\mathbb{G}_a$-action on $\mathbb{A}^5$ in characteristic zero:
\[ \alpha \cdot (a,b,c,d,e) = \left(a, b + \alpha a^3,c + \alpha b + \frac{1}{2} \alpha^2 a^3, d + \alpha c + \frac{1}{2}\alpha^2 b + \frac{1}{6}\alpha^3 a^3, e + \alpha a^2\right).
\]
The invariant ring of an additive group action is known to correspond to the kernel of a locally nilpotent derivation, see for example \cite[\S 1.5]{FRE}. Daigle and Freudenburg's $\mathbb{G}_a$-action corresponds to the kernel of the locally nilpotent derivation 
\[ D:= x^3 \frac{\partial}{\partial s} + s \frac{\partial}{\partial t} + t \frac{\partial}{\partial u} + x^2 \frac{\partial}{\partial v}, \] 
on the polynomial ring $R:= \mathbb{K}[x,s,t,u,v]$. 
There are many algorithms which have been developed to aid in computing kernels of locally nilpotent derivations which we exploit in this paper. Daigle and Freudenburg's example is also closely related to Roberts' example in dimension seven, and a counterexample found by Freudenburg in dimension six, \cite{FREU}. It is possible to construct Daigle and Freudenburg's counterexample  by ``removing symmetries'' from \citeauthor{ROBS}' example, \cite[\S 7.2]{FRE} and, there is a $\mathbb{K}$-algebra homomorphism from Freudenburg's example to Daigle and Freudenburg's which induces a surjective homomorphism on the invariant rings, \cite[\S 2]{TAN}. 

\citeauthor{CAS} provide an infinite family of non-finitely generated $\mathbb{K}$-algebras, which \citeauthor{DOR} realised as the ring of invariants of a $\mathbb{G}_a$-action on a polynomial ring. Additionally, \citeauthor{KUR} has generalised Robert's example in \cite{KUR}. However, it remains a difficult task to construct counterexamples as invariant rings from $\mathbb{G}_a$-actions; and little is known about the structure of these invariant rings in general.

%An infinite family of non-finitely generated invariant rings under a $\mathbb{G}_a$-action has been discovered from the works of \citeauthor{DOR} in \cite{DOR} and \citeauthor{CAS} in \cite{CAS}. They show that the invariant ring arising from a $\mathbb{G}_a$-action on a polynomial ring, which itself arises as the Cox ring of a particular toric variety, \cite[\S 2]{DOR}, is not finitely generated. 

%is also closely related to Roberts' counterexample in dimension seven, and can be constructed by ``removing symmetries'' from \citeauthor{ROBS}' counterexample, \cite[\S 7.2]{FRE}. Similarly, it can be related to Freudenberg's example in dimension six, \cite{FREU}. Currently there are still very few examples in which the invariant ring has been shown to not be finitely generated. 

In this paper we determine the \emph{finite generation ideal} of the invariant ring, $R^D$, defined below. That is, the radical ideal of elements $f \in R^D$ for which $R^D_f$ is finitely generated, %introduced as the \emph{finite generation locus ideal} in \citeauthor{Kem}'s paper, 
\cite[\S 2]{Kem}. Such an ideal can be understood to track how far a ring is from being finitely generated, with it being the ring itself when $R^D$ is finitely generated.
\citeauthor{DUF} computed the finite generation ideal for Roberts' example in \cite[\S 9]{DUF} and our computation shows that the finite generation ideal is what would be expected by ``removing symmetries" from Roberts' example. Preliminaries and some early results on Daigle and Freudenberg's example are covered in \S 2 and \S 3 respectively.

In order to compute the finite generation ideal, we first construct a generating set for the invariant ring in \S 4 with useful properties. This requires us to construct three infinite families of invariants using a method similar to van den Essen in \cite{VAN}. We then show that these families, together with an additional invariant, generate the invariant ring. In fact, we show that this generating set forms a SAGBI-basis for $R^D$; that is, a Subalgebra Analogue for a Gr\"obner Basis of Ideals, Definition \ref{SAGBI-def}. Our calculation of the SAGBI-basis uses an argument similar to that of \citeauthor{KUR} for Roberts' example, \cite[\S 3]{KUR}. SAGBI-bases were first constructed by Robbiano and Sweedler in \cite{ROB} and Kapur and Madlener \cite{KAP} independently. The properties of a SAGBI-basis and the relations between its elements are key to our computation of the finite generation ideal in \S 5, which comprises Theorem \ref{finite generation ideal}, our main result. We additionally show that the leading terms of these three infinite families generate the subalgebra generated by the leading terms of the finite generation ideal. %Do we?

\section{Preliminaries}
Throughout the following we fix $\mathbb{K}$ to be an algebraically closed field of characteristic zero. We will begin with a few preliminaries, followed by a discussion of the invariant ring of a group action. We follow parts of \cite{ALG}, \cite{DUF} and \cite{FRE}. Let $R$ be a commutative $\mathbb{K}$-domain and recall that a derivation $D\colon R \rightarrow R$ is %a function satisfying the following properties for $a,b \in R$:
%\begin{itemize}
%    \item $D(a+b) = D(a) + D(b),$
%    \item $D(ab) = aD(b) + D(a)b.$
%\end{itemize}
\emph{locally nilpotent} if, for all $a \in R$, there is some $n \in \mathbb{N}$ for which $D^n(a) = 0$. We denote the kernel of the derivation by $R^D$ and its image by $D(R)$. Note that $R^D$ is a subring of $R$. We call an element
%Let $R$ be a commutative $\mathbb{K}$ domain and $D:R \rightarrow R$ a derivation. An element 
$p \in R$ with %$D(s) =1$ a \emph{slice} of $D$, an element $s$ with 
$D(p) \in \, R^D$ and $D(p) \neq 0$ a \emph{local slice} for $D$. The \emph{plinth ideal} is defined as $ \mathfrak{pl}(D):=R^D \cap D(R)$; it is simple to check that $\mathfrak{pl}(D)$ is an ideal of $R^D$, see for example \cite[p.17]{FRE}.
\begin{defn}
Let $R$ be a $\mathbb{K}$ domain, the \emph{finite generation ideal} of $R$ is defined as 
\[ \mathfrak{f}_{R}:= \lbrace g \in R \setminus \lbrace 0 \rbrace \, \vert \, R_g \text{ is finitely generated as a }\mathbb{K}\text{-algebra}\, \rbrace \cup \lbrace 0 \rbrace.  \] 
\end{defn}
Note that if $R$ is finitely generated, then $\mathfrak{f}_{R} = R$. If $R$ is a subalgebra of a finitely generated algebra, then $\mathfrak{f}_R$ is non-zero; additionally, $\mathfrak{f}_R$ is a radical ideal, \cite[\S 2.2]{Kem}. One can hence view the finite generation ideal as a form of measure of how far $R$ is from being finitely generated by comparing $\mathfrak{f}_R$ to $R$. Of particular interest to us in this paper is $\mathfrak{f}_{R^D}$, where $D$ is a locally nilpotent derivation. 

If $R= \mathbb{K}[X_1,\dots, X_n]$ we call an element of the form $X_1^{a_1}\cdots X_n^{a_n}$ a \emph{monomial} and an element of the form $\alpha \cdot X_1^{a_1}\cdots X_n^{a_n}$, where $\alpha \in \mathbb{K} \setminus \lbrace 0 \rbrace$ is called a \emph{term}; %$a,b,c,d,e \in \mathbb{Z}_{\geq 0}$
a \emph{polynomial} is a sum of terms. 
\begin{defn}
Suppose $R = \mathbb{K}[X_1,X_2,\dots,X_n]$ and suppose that $D = a_1\frac{\partial}{\partial X_1} + \dots + a_n \frac{\partial}{\partial X_n}$ is a derivation on $R$. We say that $D$ is a \emph{monomial} derivation if each $a_i \in \mathbb{K}[X_1,X_2,\dots,X_n]$ is a monomial; a derivation is called \emph{triangular} if $a_1 \in \mathbb{K}$ and each $a_i \in \mathbb{K}[X_{1},\dots,X_{i-1}]$ for $2 \leq i \leq n$.
\end{defn}

%\begin{defn}
%The ideal $ \mathfrak{pl}(D):=\ker\,(D) \cap \mathrm{Im}\,(D)$ is called the \emph{plinth ideal} of $\ker\, (D)$. %The corresponding zero set $\text{Pl}:=\mathcal{V}(\mathfrak{pl}(D)) \subset X$ is called the \emph{plinth variety} of $X$. 
%\end{defn}
%$\mathcal{O}(V)$. 
If $R$ is a $\mathbb{K}$-domain then $V = \Spec(R)$ denotes the corresponding scheme, which is an affine variety when $R$ is affine. We denote affine $k$-space by $\mathbb{A}^k$, with coordinate ring the polynomial ring $\mathbb{K}[x_1, \dots, x_k]$. Suppose that $G$ is an algebraic group acting on an affine variety $V$, then $G$ acts by $\mathbb{K}$-algebra automorphisms on $R$ as
\[g\cdot f(v) := f(g^{-1}\cdot v)  \text{ for all } x \in V, \, f \in R.\]
The \emph{invariant ring} for this action is
\[ R^G:= \lbrace f \in R \, \vert \, g \cdot f = f \text{ for all } g \in G \rbrace. \]
An element $f \in R^G$ is called an \emph{invariant}. The \emph{fixed point set} of the action is defined as
\[ V^G:= \lbrace x \in V \, \vert \, g \cdot x = x \text{ for all } g \in G \rbrace. \]

We focus on the case where $G = \mathbb{G}_a = (\mathbb{K}, +)$ is the additive group of the field $\mathbb{K}$. It is known that in this case we have a one-to-one correspondence between algebraic group actions and locally nilpotent derivations, \cite[\S 1.5]{FRE}. Let $D$ be a locally nilpotent derivation on $R$, an element $\alpha \in \mathbb{G}_a$ acts on $R$ as
\[ \exp(\alpha D)(f) := \sum_{i=0}^\infty \frac{1}{i!} \alpha^i D^i(f). \]
Note that since $D$ is locally nilpotent, for each $f \in R$ the displayed sum has only finitely many non-zero terms. Conversely a given $\mathbb{G}_a$-action $\rho:\mathbb{G}_a \times V \rightarrow V$ induces a derivation $\rho'(0)$, which can be shown to be locally nilpotent, see for example \cite[\S 1.5]{FRE}. Additionally, the invariant ring of the group action is equal to the kernel of the derivation, that is $R^{\mathbb{G}_a} = R^D$. 

Given an affine variety $V$ and an algebraic group $G$ acting on $V$, we wish to define a quotient $V \sslash G$. When $G$ is reductive, the ring $R^G$ is finitely-generated, so we get a corresponding variety $V \sslash G:=\Spec(R^G)$. When $G$ is not reductive, $R^G$ may not be finitely generated. Nevertheless, we may still define $V\sslash G:=\Spec(R^G)$ as an affine scheme, and the usual universal property still holds in the category of affine schemes, \cite[p.3]{MUMFORD}. We record this in the following definition:

\begin{defn}
Given $V$, an affine variety, and $G$, an algebraic group acting on $V$, there is a morphism induced by the inclusion $R^G \subset R$:
\[ \pi_V\colon V \rightarrow V \sslash G:= \Spec(R^G), \]
we call this the \emph{quotient morphism}. $V \sslash G$ is the \emph{categorical quotient} in the category of affine schemes, satisfying the universal property that every $G$-invariant morphism from $V$ to some affine scheme $W$ factors uniquely through $\pi_V$. 
%Additionally the \emph{Nullcone} is defined as $\mathcal{N}_V:=\pi_V^{-1}(\pi_V(0))$.
\end{defn}
%\begin{rem}
%This universal property is still maintained even if $G$ is not reductive, and the categorical quotient remains well-defined \cite[p. 95]{KRA}.
%\end{rem}
Given an additive group action of $\mathbb{G}_a$ on $V$, we can ask if there is some $\mathbb{G}_m$-action on $V$ commuting with our $\mathbb{G}_a$-action. If so, then $\mathbb{G}_m$ acts on $V \sslash \mathbb{G}_a$ and hence induces a grading on $R^{\mathbb{G}_a}$, as well as on $R$, see for example \cite[\S 10.2]{FRE}. When $V \subset \mathbb{A}^k$ is affine, there is some maximal subtorus of the natural $k$-dimensional torus action on $\mathbb{A}^{k}$ that is $\mathbb{G}_a$-equivariant.

If $X$ is a variety with an action of the additive group $\mathbb{G}_a$, we say that $X$ is a \emph{trivial $\mathbb{G}_a$-bundle} if there is a $\mathbb{G}_a$-equivariant morphism $X \rightarrow \mathbb{G}_a$. In this case we can identify $X\sslash \mathbb{G}_a$ with $X/\mathbb{G}_a$ and the quotient morphism $\pi_X: X \rightarrow X/\mathbb{G}_a$ admits a section. If $X$ is affine, then $X/\mathbb{G}_a = \Spec(R)^{\mathbb{G}_a}$.

Suppose $V = \Spec(R)$ is an affine $\mathbb{K}$-variety, with $G$ an algebraic group acting on $V$. Suppose also that $R = \bigoplus_{i \geq 0} R_i$ is a graded ring with $R_0 = \mathbb{K}$, and let $z_0$ be its homogeneous maximal ideal. This means that $V$ admits an action of the multiplicative group $\mathbb{G}_m = \mathbb{K}^*$ with $z_0 \in \Spec(R)$ the unique closed orbit. We say that $V$ is a \emph{fix-pointed} $G$-variety with fixed point $z_0$ if this $\mathbb{G}_m$-action commutes with the $G$-action. Note that $V \sslash G$ is also fix-pointed, and we define the \emph{nullcone} as $\mathcal{N}_V = \pi^{-1}(\pi(z_0))$. 
Thus, given an additive group action of $\mathbb{G}_a$ on an affine variety $V$, the nullcone can be defined using the induced $\mathbb{Z}^r$-grading on $R$.

Let $D$ be a locally nilpotent derivation on an affine $\mathbb{K}$-domain $R$, with $V= \Spec(R)$. Suppose $x \in V$ is a fixed point under the corresponding $\mathbb{G}_a$-action and, for $f \in R$, let $n \in \mathbb{N}$ be such that $D^n(f) \neq 0$ and $D^{n+1}(f) =0$. Then we have
\[ f(x) = \exp (\alpha D)(f)(x) = f(x) + \alpha D(f)(x) + \cdots \frac{1}{n!}\alpha^n D^n(f)(x). \] 
Letting $\alpha$ vary, we conclude that $D^i(f)(x) = 0$ for all $i \geq 1$, and in particular $D(f)(x) = 0$. Now suppose that for $x \in R$ we have $D(f)(x) = 0$ for all $f \in R$. Then for $f \in R$
\[ \exp(\alpha D)(f)(x) = f(x) + \alpha D(f)(x) + \cdots \frac{1}{n!} \alpha^n D^n(f)(x) = f(x), \]
thus $x$ is a fixed point. We have shown that:
\[ V^{\mathbb{G}_a} = \lbrace x \in V \, \vert \, D(f)(x) = 0 \text{ for all } f \in R \rbrace = \lbrace x \in V \, \vert \, f(x) = 0 \text{ for all } f \in D(R) \rbrace. \]
Consider $\mathcal{P}_V:=\mathcal{V}(\mathfrak{pl}(D))$, the \emph{Plinth variety} of $V$. We have 
\[ \mathcal{P}_V = \lbrace x \in V \, \vert \, D(f)(x) = 0 \text{ for all } f \in R \text{ with } D^2(f) = 0 \rbrace, \]
and hence we have $V^{\mathbb{G}_a} \subset \mathcal{P}_V$.
Given a $\mathbb{Z}_{\geq 0}^r$-grading on $R^{\mathbb{G}_a}$ induced by a $(\mathbb{G}_m)^r$-action, we define $R^{\mathbb{G}_a}_+ \! := \lbrace f \in R^{\mathbb{G}_a} \, \vert \, \mathrm{deg}\,(f) \neq (0,0,\dots,0) \rbrace$, the maximal graded ideal of $R^{\mathbb{G}_a}$. Our definition of the nullcone may then be rewritten as 
\[ \mathcal{N}_V = \lbrace x \in V \, \vert \, f(x) = 0 \text{ for all } f \in R^{\mathbb{G}_a}_+ \rbrace. \]

\section{Daigle \& Freudenburg's counterexample}
We now construct Daigle and Fruedenberg's counterexample.
Let $V = \mathbb{A}^5$ and let $R:= \mathbb{K}[x,s,t,u,v]$ be the polynomial ring over $\mathbb{K}$ in $5$ variables. We consider the following locally nilpotent derivation on $R$
\begin{equation} \label{D}
    D:= x^3 \frac{\partial}{\partial s} + s \frac{\partial}{\partial t} + t \frac{\partial}{\partial u} + x^2 \frac{\partial}{\partial v}.
\end{equation} 
This corresponds to a $\mathbb{G}_a$-action on $R$ defined by
\[ \exp(\alpha D) \cdot (x,s,t,u,v) = (x, s + \alpha x^3,t + \alpha s + \frac{1}{2} \alpha^2 x^3, u + \alpha t + \frac{1}{2}\alpha^2 s + \frac{1}{6}\alpha^3 x^3, v + \alpha x^2).  \] 
The $\mathbb{G}_a$-action on $\mathbb{A}^5$ commutes with the following $\mathbb{G}_m$-action 
\begin{equation} \label{G_m action}
    \lambda \cdot (x,s,t,u,v):= (\lambda \cdot x, \lambda^3 \cdot s, \lambda^3 \cdot t, \lambda^3 \cdot u, \lambda^2 \cdot v),
\end{equation} 
which induces a grading on $R$ with deg$(x) =1$, deg$(s)=\,$deg$(t)=\,$deg$(u)=3$ and deg$(v)=2$. In the sequel when we refer to $f \in R$ as \emph{homogeneous}, we mean homogeneous with respect to this grading. Likewise, for $f \in R$, the degree of $f$ is the maximal degree of some term of $f$ with respect to this grading. In our treatment of this example we will occasionally consider elements ordered by deg$_v$ or deg$_x$, which are defined for $f \in R$ as deg$_x(f):=$max$\lbrace n \, \vert \, f \text{ has a term of the form }\alpha \cdot x^ns^at^bu^cv^d, \alpha \in \mathbb{K} \setminus \lbrace 0 \rbrace, a,b,c,d \in \mathbb{N}\rbrace$ and similarly for $v$. A polynomial $f \in R$ has $v$-degree $n$ if $\deg_v(f) = n$.

We also make use of the grading induced by the derivation $D$ itself, which we refer to as the \emph{$\rho$-grading}. It is defined first on the monomials in $R$ with 
\begin{equation} \label{rho-grading}
    \rho(m) := \lbrace i \in \mathbb{Z}_{\geq 0} \,\vert \, D^i(m) \neq 0, D^{i+1}(m) =0 \rbrace.
\end{equation}
We then set, for $f \in R$, $\rho(f) := \max\lbrace \rho(m) \,\vert\, m \text{ is a term of } f \rbrace$. We set $\rho(0):=-\infty$. Elements homogeneous with respect to this grading will be called $\rho$-\emph{homogeneous}. Observe that $\rho(x^a) = 0, \, \rho(s^b) = b, \, \rho(t^c) = 2c, \, \, \rho(u^d) = 3d$ and $\rho(v^e)=e$. Note that the $\rho$-grading is indeed a grading; set 
\[R_n:= \left \lbrace \sum_i \lambda_i x^a s^b t^c u^d v^e \, \vert \, \lambda_i \in \mathbb{K}, a \in \mathbb{N}, b + 2c + 3d + e = n \right\rbrace.  \]
Now, for $p \in R_i, \, \, q \in R_j$, their product is non-zero, and all terms in $pq$ are of the form $mn$, where $m \in R_i, n \in  R_j$. We have 
\[ D^{i+j}(mn) = \sum_{l=0}^{i+j} \binom{i+j}{l} D^l(m) D^{i+j-l}(n) = \binom{i + j}{i} D^i(m) D^j(n) \neq 0,\] whilst $D^{i+j+1}(mn) = D(D^i(m) D^j(n)) = D^{i+1}(m)D^j(n) + D^i(m)D^{j+1}(n) = 0$, so we conclude $pq \in R_{i+j}$. 
\begin{rem}
$\rho(2x^3t-s^2) = 2$ whilst $D(2x^3t-s^2) = 0$, so for $p \in R$, $\rho(p)$ can differ from the unique non-negative integer $m$ with $D^m(p) \neq 0$ but $D^{m+1}(p)=0$. %This will be of importance when working with van den Essen's algorithm in \S 4 and beyond.  
\end{rem} 

Now let $S:= \mathbb{K}[x,s,t,u]$, and define 
\[ \Delta:= D \vert_S = x^3 \frac{\partial}{\partial s} + s \frac{\partial}{\partial t} + t \frac{\partial}{\partial u}. \]
Our notions of degree and $\rho$-degree restrict to $f \in S$. We observe that $\Delta$ is a triangular monomial derivation on $\mathbb{K}[x,s,t,u]$. By a result of \citeauthor{MAU}, \cite[\S~3]{MAU}, we have that $S^{\Delta}$ is generated by at most four elements. Through an application of van den Essen's algorithm, \cite[\S~4]{ALG}, using the local slice $s \in S$ we find the following four generators of $S^{\Delta}$:
\begin{equation} \begin{split}\label{S^Delta generators}
 \beta_0  &= x,  \\
 \gamma_0  &= 2x^3t - s^2, \\
 \delta_0  &= 3x^6u - 3x^3st + s^3, \\
 g \, \, &= 9x^6u^2 - 18x^3stu + 6s^3u + 8x^3t^3 - 3s^2t^2.
\end{split} \end{equation}
Observe that $\beta_0^{3}, \gamma_0, \delta_0 \in \Delta(S)$, since
\[ \begin{array}{ll}
\Delta(s)&= x^3 = \beta_0^3,  \\
\Delta(3x^3u - st)&= 2x^3t -s^2 = \gamma_0, \\
\Delta(3x^3su - 4x^3t^2 + s^2t)&= 3x^6u - 3x^3st + s^3 = \delta_0. 
\end{array} \]
We can compute the plinth variety, the fixed-point set and the nullcone for this counterexample:
\begin{lem}
    \begin{enumerate}
        \item $\left(\mathbb{A}^5 \right)^{\mathbb{G}_a} = \mathcal{V}_{\mathbb{A}^5}(x,s,t)$.
        \item $\mathcal{P}_{\mathbb{A}^5} = \mathcal{V}_{\mathbb{A}^5}(x,s).$
        \item $\mathcal{N}_{\mathbb{A}^5}= \mathcal{V}_{\mathbb{A}^5}(x,s).$
    \end{enumerate}
\end{lem}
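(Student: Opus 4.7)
The plan is to establish (1) by direct computation and to reduce (2) and (3) to the single core claim that $R^D_+\subseteq(x,s)$.

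For (1), the preliminaries characterize $V^{\mathbb{G}_a}$ as the locus where $D(f)(p)=0$ for every $f\in R$; by the Leibniz rule it suffices to check $f\in\{x,s,t,u,v\}$. Using $D(x)=0$, $D(s)=x^3$, $D(t)=s$, $D(u)=t$, $D(v)=x^2$, the fixed-point equations collapse to $x=s=t=0$, yielding $V^{\mathbb{G}_a}=\mathcal{V}(x,s,t)$.

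For the ``$\subseteq$'' directions of (2) and (3), I would use that $x^3=D(s)\in\mathfrak{pl}(D)$ and $\gamma_0=D(3x^3u-st)\in\mathfrak{pl}(D)$, both available from (\ref{S^Delta generators}), to obtain $\mathcal{P}_{\mathbb{A}^5}\subseteq\mathcal{V}(x^3,\gamma_0)=\mathcal{V}(x,s)$. On the nullcone side, $x\in R^D_+$ together with $\omega:=xv-s\in R^D_+$ (a direct check shows $D(\omega)=x\cdot x^2-x^3=0$) gives $\mathcal{N}_{\mathbb{A}^5}\subseteq\mathcal{V}(x,\omega)=\mathcal{V}(x,s)$. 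Since $\mathfrak{pl}(D)\cap\mathbb{K}=\{0\}$, one has $\mathfrak{pl}(D)\subseteq R^D_+$, and hence in general $\mathcal{N}_{\mathbb{A}^5}\subseteq\mathcal{P}_{\mathbb{A}^5}$.

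The reverse inclusions in (2) and (3) then follow simultaneously from the claim $R^D_+\subseteq(x,s)$, via the chain $\mathcal{V}(x,s)\subseteq\mathcal{N}_{\mathbb{A}^5}\subseteq\mathcal{P}_{\mathbb{A}^5}\subseteq\mathcal{V}(x,s)$. This is equivalent to saying that the composite $R^D\hookrightarrow R\twoheadrightarrow R/(x,s)=\mathbb{K}[t,u,v]$ has image $\mathbb{K}$. I plan to establish it by appealing to the generating set for $R^D$ constructed in \S4: the SAGBI basis will consist of $x$, $\omega$, and three infinite families extending $\gamma_0,\delta_0,g$ from (\ref{S^Delta generators}), and by inspection of van den Essen's recursive construction every generator other than $x$ itself lies in $(x,s)$. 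A positive-degree polynomial in these generators either carries a factor from $(x,s)$ (via one of the non-$x$ generators) or is a positive power of $x$ (hence in $(x)\subseteq(x,s)$), so $R^D_+\subseteq(x,s)$. The principal obstacle is this forward dependence on \S4, in particular verifying that the inductive step of van den Essen's construction preserves membership in the ideal $(x,s)$ for each member of the three infinite families.
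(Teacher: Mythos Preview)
Your proof of (1) matches the paper's. For (2) and (3), your overall strategy---reducing both to the claim $R^D_+\subseteq(x,s)$ via the chain $\mathcal{V}(x,s)\subseteq\mathcal{N}\subseteq\mathcal{P}\subseteq\mathcal{V}(x,s)$---is correct and is exactly what the paper does as well. The difference lies in how $R^D_+\subseteq(x,s)$ is established.

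The paper proves this directly, without any appeal to \S4, by a short induction on the $\mathbb{G}_m$-degree: the key observation is that $\partial/\partial t$, $\partial/\partial u$, $\partial/\partial v$ all commute with $D$, so the partial derivatives of a homogeneous invariant are again invariants of strictly smaller degree. Writing $f=xp_1+sp_2+g$ with $g\in\mathbb{K}[t,u,v]$, the inductive hypothesis forces $\partial g/\partial t=\partial g/\partial u=\partial g/\partial v=0$, whence $g$ is constant and hence zero by homogeneity.

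Your route via the \S4 generating set is legitimate (\S4 does not invoke this lemma, so there is no circularity), but it is a forward reference and the ``principal obstacle'' you flag is real: you still owe a verification that every $\beta_n,\gamma_n,\delta_n,g$ lies in $(x,s)$. That verification is feasible---for instance, a bidegree count shows no monomial $t^ru^w$ can match the $(\deg,\rho)$ bidegree of any $e_0^{(k)}$ or of $g$---but tracing through van den Essen's recursion as you propose is more laborious than necessary. Note also a small misdescription of the SAGBI basis: in \S4 it is $\{g,\beta_n,\gamma_n,\delta_n:n\geq0\}$, with $x=\beta_0$ and $\omega=\beta_1$ already belonging to the $\beta$-family and $g$ a \emph{single} element, not an infinite family. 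The paper's commutation argument buys you a self-contained proof that sidesteps all of this.
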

\begin{proof}
$\emph{1}.$ 
As shown above, we have
\[ \left(\mathbb{A}^5\right)^{\mathbb{G}_a} = \lbrace p \in \mathbb{A}^5 \, \vert \, D(f)(p) = 0 \text{ for all } f \in R \rbrace. \]
Suppose $p = (p_1,p_2,p_3,p_4,p_5) \in (\mathbb{A}^5)^{\mathbb{G}_a}$ and $f \in R$, then
\[ D(f)(p) = p_1^3 \frac{\partial f}{\partial s}(p) + p_2 \frac{\partial f}{\partial t}(p) + p_3 \frac{\partial f}{\partial u}(p) + p_1^2 \frac{\partial f}{\partial v}(p). \]
Clearly if $p = (0,0,0,p_4,p_5)$, then $D(f)(p) = 0$ for all $f \in R$. Conversely suppose at least one of $p_1, p_2$ or $p_3 \neq 0$, then one of $D(s)(p), D(t)(p), D(u)(p)$ is non-zero, so $\left(\mathbb{A}^5 \right)^{\mathbb{G}_a} = \mathcal{V}_{\mathbb{A}^5}(x,s,t)$ as claimed.

\bigskip \noindent $\emph{2}.$ 
By our observations above we have $\beta_0^3, \gamma_0, \delta_0 \in \mathfrak{pl}(D)$, so $\mathcal{V}_{\mathbb{A}^5}(\beta_0^3,\gamma_0,\delta_0) = \mathcal{V}_{\mathbb{A}^5}(x,s) \subset \mathcal{P}_{\mathbb{A}^5}$. 
Let $f \in \mathfrak{pl}(D)$, we will show that $f \in \mathcal{V}_{\mathbb{A}^5}(x,s)$. Suppose that $f$ is homogeneous, we induct on the degree, $n$, induced by the $\mathbb{G}_m$-action introduced in equation \ref{G_m action}. Note that there are no elements of degree $0$ or $1$ in the plinth ideal. The only elements in the plinth ideal of degree $2$ or $3$ are the monomials $x^2$ and $x^3$. 
Now suppose $f \in \mathfrak{pl}(D)$ has degree $n>3$ and that $f = xp_1 + sp_2 + g$, where $p_1,p_2 \in \mathbb{K}[x,s,t,u,v]$ and $g \in \mathbb{K}[t,u,v]$. Observe that the partial derivatives $\frac{\partial}{\partial t}, \frac{\partial}{\partial u}, \frac{\partial}{\partial v}$ all commute with the derivation $D$,
%\[ \frac{\partial}{\partial t} \circ D = D \circ \frac{\partial}{\partial t}, \quad \frac{\partial}{\partial u} \circ D = D \circ \frac{\partial}{\partial u}, \quad \frac{\partial}{\partial v} \circ D = D \circ \frac{\partial}{\partial v}. \]
and so for any $p \in R^D$, we have
\[ 0 = \frac{\partial}{\partial t}(D(p)) = D\left( \frac{\partial}{\partial t}(p)\right) = \frac{\partial}{\partial u}(D(p)) = D\left( \frac{\partial}{\partial u}(p)\right) = \frac{\partial}{\partial v}(D(p)) = D\left( \frac{\partial}{\partial v}(p)\right). \]
Hence $\frac{\partial f}{\partial t}, \frac{\partial f}{\partial u}, \frac{\partial f}{\partial v} \in R^D$ since $f \in R^D$. But note that these partial derivatives all have degree $n-3$, hence by induction we have that there are $h_1, h_2, h_3, h_4, h_5, h_6 \in \mathbb{K}[x,s,t,u,v]$ with 
\begin{equation} \label{h1-h6 induction}
    \frac{\partial f}{\partial t} = xh_1 + sh_2, \quad \frac{\partial f}{\partial u} = xh_3 + sh_4, \quad \frac{\partial f}{\partial v} = xh_5 + s h_6.
\end{equation} 
However
\[ \frac{\partial f}{\partial t} = x\frac{\partial p_1}{\partial t} + s \frac{\partial p_2}{\partial t} + \frac{\partial g}{\partial t}, \qquad \frac{\partial f}{\partial u}= x\frac{\partial p_1}{\partial u} + s \frac{\partial p_2}{\partial u} + \frac{\partial g}{\partial u}, \qquad \frac{\partial f}{\partial v}= x\frac{\partial p_1}{\partial v} + s \frac{\partial p_2}{\partial v} + \frac{\partial g}{\partial v}, \]
where $\frac{\partial g}{\partial t}, \frac{\partial g}{\partial u}, \frac{\partial g}{\partial v} \in \mathbb{K}[t,u,v]$, hence these partial derivatives of $g$ must all be zero by equation \ref{h1-h6 induction}. Since we have assumed $f$ is homogeneous, this implies that $g=0$.

\bigskip \noindent $\emph{3}.$ 
Recall that the nullcone is given by 
\[\pi_{\mathbb{A}^5}^{-1}(\pi_{\mathbb{A}^5}(0)) = \mathcal{N}_{\mathbb{A}^5} = \left\lbrace v \in \mathbb{A}^5 \, \vert \, f(v) =0 \text{ for all } f \in R^{\mathbb{G}_a}_+ \right\rbrace.\]
Suppose $v = (v_1,v_2,v_3,v_4,v_5) \in \mathcal{N}_{\mathbb{A}^5}$, %\pi_{\mathbb{A}^5}^{-1}(\pi_{\mathbb{A}^5}(0))$, 
then for $\beta_0, \gamma_0 \in R^{\mathbb{G}_a}_+$, we have $\beta_0(v) = v_1 = 0$, $\gamma_0(v) = 2(0)^3v_3 - v_2^2 = 0$, so $v = (0,0,v_3,v_4,v_5)$ and %$\pi_{\mathbb{A}^5}^{-1}(\pi_{\mathbb{A}^5}(0)) 
$\mathcal{N}_{\mathbb{A}^5} \subset \mathcal{V}_{\mathbb{A}^5}(x,s)$. Now suppose that $v \in \mathcal{V}_{\mathbb{A}^5}(x,s)$, and let $f \in R^{\mathbb{G}_a}_+$, the above calculation for the plinth variety shows that $R^{\mathbb{G}_a}_+ \subset (x,s)\mathbb{K}[x,s,t,u,v]$. Therefore we can write $f = xh_1 + sh_2$ with $h_1,h_2 \in \mathbb{K}[x,s,t,u,v]$ and so $f(v)=0$, implying $v \in \mathcal{N}_{\mathbb{A}^5}$ %\pi_{\mathbb{A}^5}^{-1}(\pi_{\mathbb{A}^5}(0))$
and hence that $\mathcal{N}_{\mathbb{A}^5}= \mathcal{V}_{\mathbb{A}^5}(x,s)$.
\end{proof}
\section{A SAGBI-basis for Daigle and Freudenburg's counterexample}
\subsection{Constructing three infinite families of invariants}
We first show that $R^D$ is not finitely generated, to do this we show that $R^D$ has three infinite families of homogeneous invariants. We call the members of these families $\beta_i, \gamma_i$ and $\delta_i$ respectively, with $i \in \mathbb{N}$ corresponding to the $v$-degree of the invariant. Recall that $S^{\Delta}$ is generated $\beta_0, \gamma_0, \delta_0$ and $g$, defined in equation \ref{S^Delta generators}. We construct the $\beta_i, \gamma_i$ and $\delta_i$ so that $\beta_i := \beta_0 v^i + \text{ terms of lower }v\text{-degree}$, and similarly for $\gamma_i$ and $\delta_i$. For $i=1$ we must find, for example, some $f \in R$ so that $D(\beta_0 v + f) = x^3 + D(f) = 0$. This is a simple task for $\beta_1, \gamma_1$ and $\delta_1$ since $x^2\beta_0, x^2\gamma_0, x^2\delta_0 \in \mathfrak{pl}(D)$, giving: 
\[
\begin{array}{l}
\beta_1 = xv - s, \\  %\beta_2 = xv^2 - 2sv + 2x^2t \\
\gamma_1 = (2x^3t - s^2)v + x^2st - 3x^5u, \\ %\gamma_2 = 2x^3tv^2 -s^2v^2 -6x^5uv  + 2x^2stv + 6x^4su - 4x^4t^2
\delta_1 = (3x^6u - 3x^3st + s^3)v - 3x^5su +4x^5t^2 - x^2s^2t.
\end{array} \]
In general these invariants are difficult to construct, but we show that such invariants exist. Once this is accomplished, we construct a SAGBI-basis for $R^D$.
\begin{defn}
Let $\mathcal{M}$ be the set of all monomials in $\mathbb{K}[x_1,x_2,\dots,x_n]$, A \emph{monomial} ordering is a total order $``\!>\!"$ on $\mathcal{M}$ which satisfies the following conditions:
\begin{itemize}
    \item $m > 1 \text{ for all } m \in \mathcal{M} \setminus \lbrace 1 \rbrace$, 
    \item $m_1 > m_2$ implies $bm_1 > bm_2$ for all $b,m_1,m_2 \in \mathcal{M}$. 
\end{itemize} 
We write $x_i >\!> x_j$ if $x_i > x_j^a$ for all $a \in \mathbb{Z}_{\geq 0}$. Given a non-zero polynomial $f \in \mathbb{K}[x_1,x_2,\dots,x_n]$, we can write $f$ uniquely as $f=cm+g$, where $m \in M$, $c \in \mathbb{K} \setminus \lbrace 0 \rbrace$ and every monomial appearing as part of a term in $g$ is smaller than $m$ with respect to our ordering. We call $cm$ the \emph{leading term} of $f$, and write LT$(f) = cm$. Additionally, $m$ is the \emph{leading monomial} of $f$, denoted LM$(f)$.
\end{defn}
We may now define a Gr\"obner basis, we use \cite[p.~10]{DER}.
\begin{defn}
Fix a monomial ordering on $\mathbb{K}[x_1,x_2,\dots,x_n]$ and let $S \subset \mathbb{K}[x_1,x_2,\dots,x_n]$ be a set of polynomials. We write 
\[ \text{L}(S) := \left( \text{LM}(f) \, \vert \, f \in S \right), \]
for the ideal generated by the leading monomials from $S$, called the \emph{leading ideal} of $S$. Now let $I\subset\mathbb{K}[x_1,x_2,\dots, x_n]$ be an ideal, then a finite subset $\mathcal{G} \subset I$ is called a Gr\"obner basis for $I$ if L$(I) = \,$L$(\mathcal{G})$. 
\end{defn}
\begin{rem}
A Gr\"obner basis of $I$ generates $I$ as an ideal: suppose that $f \in I$ is an element with minimal leading monomial which is not contained in the ideal generated by $\mathcal{G}$. Then, as $\mathcal{G}$ is a Gr\"obner basis, \emph{LM}$(f) \in \,\, $\emph{L}$(\mathcal{G})$, so there is some $h \in (\mathcal{G})$  with \emph{LM}$(h) = \, $\emph{LM}$(f)$. Now $f - h$ has smaller leading monomial than $f$, and must be contained in the ideal generated by $\mathcal{G}$. This means $f = (f-h) + h$ is contained in the ideal generated by $\mathcal{G}$, a contradiction.  
\end{rem}
An analogous concept for subalgebras also exists, called a SAGBI-basis  
\begin{defn} \label{SAGBI-def}
A \emph{Subalgebra Analogue for Gr\"obner Bases of Ideals} or ``SAGBI-basis" is defined as follows: Let $``\!>\!"$ be a monomial ordering on the polynomial ring $\mathbb{K}[x_1,x_2,\dots,x_n]$. For a subalgebra $A \subset \mathbb{K}[x_1,x_2,\dots,x_n]$, we write $\mathrm{L}_{alg}(A)$ for the algebra generated by all leading monomials of non-zero elements in $A$. A subset $\mathcal{S} \subset A$ is called a SAGBI-basis of $A$ if $\mathrm{L}_{alg}(\mathcal{S}) = \mathrm{L}_{alg}(A)$.
\end{defn}
Note also that a SAGBI-basis $\mathcal{S}$ of a subalgebra $A$ also generates $A$ as an algebra as in the case of a Gr\"obner basis.

In the sequel we use the lexicographic monomial ordering on $R=\mathbb{K}[x,s,t,u,v]$, which is defined so that $x^{e_1}s^{e_2}t^{e_3}u^{e_4}v^{e_5} > x^{f_1}s^{f_2}t^{f_3}u^{f_4}v^{f_5}$ if $e_i > f_i$ for the largest $i$ for which we have $e_i \neq f_i$. For example $tv > v$ since both monomials have $v$ exponent $1$, $u$-exponent $0$ but $tv$ has $t$ exponent $1$ whilst $v$ has $t$ exponent $0$. We also use the lexicographic monomial ordering on $S = \mathbb{K}[x,s,t,u]$ with $x<s<t<u$.
\begin{defn}
Let $\mathcal{S} = \lbrace f_1, \dots, f_m \rbrace \subset \mathbb{K}[x_1,\dots,x_n]$ be a finite set of polynomials.
\begin{enumerate}
    \item A polynomial $p \in \mathbb{K}[x_1,\dots,x_n]$ is said to be in \emph{normal form} with respect to $\mathcal{S}$ if no term of $p$ is divisible by the leading monomial of any $f \in \mathcal{S}$.
    \item If $p, \Tilde{p} \in \mathbb{K}[x_1,\dots,x_n]$, $\Tilde{p}$ is said to be a \emph{normal form} of $p$ with respect to $\mathcal{S}$ if $\Tilde{p}$ is in normal form with respect to $\mathcal{S}$ and there are $h_1,\dots,h_m \in \mathbb{K}[x_1,\dots,x_n]$ with
    \[ p - \Tilde{p} = \sum_{i=1}^m h_if_i \quad \text{ and LM}(h_if_i) \leq \text{LM}(p) \text{ for all } i. \]
\end{enumerate}
\end{defn}

We now state the image membership algorithm, see van den Essen \cite[\S 1.4]{ARN}.
\begin{lem}[Image membership algorithm] \label{Image membership algorithm}
Let $S=\mathbb{K}[x_1,\dots,x_n]$ be a finitely generated $\mathbb{K}$-algebra, and $\mathcal{D}$ a non-zero locally nilpotent derivation on $S$. Fix $a \in S$, and let $m$ be the unique non-negative integer satisfying $\mathcal{D}^m(a) \neq 0$, $\mathcal{D}^{m+1}(a)=0$.
Let $p$ be a local slice of $\mathcal{D}$, with $d:=\mathcal{D}(p)$ and $s:=p/d \in S[d^{-1}]$. Suppose $S^{\mathcal{D}} = \mathbb{K}[f_1,\dots,f_l]$, put
\[ b':= \sum_{i=0}^{m} \frac{(-1)^i}{(i+1)!} \mathcal{D}^i(a) s^{i+1}, \]
and set $q:=d^{m+1}b'$. Define the ideal $J_m$ in $\mathbb{K}[X,Y]:=K[x_1,\dots,x_n,y_1,\dots y_l]$ as: 
\[ J_m := (y_1 - f_1, \dots, y_l-f_l, d^{m+1}), \]
and choose on $\mathbb{K}[X,Y]$ a monomial ordering so that $x_i >\!>  y_j$ for all $i,j$. Let $\mathcal{G}$ be a Gr\"obner basis of $J_m$. Let $\Tilde{q}$ be the normal form of $q$ with respect to $\mathcal{G}$.
Then $a \in \mathcal{D}(S)$ if and only if $\Tilde{q} \in \mathbb{K}[Y]$. Furthermore, if $\Tilde{q} \in \mathbb{K}[Y],$ then $b:= (q - \Tilde{q}(f_i))/d^{m+1} \in S$ satisfies $\mathcal{D}(b) = a$. The polynomial $\Tilde{q}(f_i)$ is defined by replacing each $y_i$ appearing in $\Tilde{q}$ with $f_i$.
\end{lem}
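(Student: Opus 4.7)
The plan is to first verify the identity $\mathcal{D}(b') = a$ in the localisation $S[d^{-1}]$ by a telescoping calculation, and then translate the question of whether $b'$ can be corrected to an element of $S$ with derivative $a$ into the Gröbner-basis statement about $\tilde{q}$. Since $d = \mathcal{D}(p) \in S^{\mathcal{D}}$, one computes $\mathcal{D}(s) = \mathcal{D}(p)/d = 1$, so $\mathcal{D}(s^{i+1}) = (i+1)s^i$. Differentiating $b'$ term by term with the Leibniz rule, the resulting double sum telescopes — using $\mathcal{D}^{m+1}(a) = 0$ — and collapses to $\mathcal{D}(b') = a$. Multiplying through by $d^{m+1} \in S^{\mathcal{D}}$ yields $\mathcal{D}(q) = d^{m+1} a$ in $S$.

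For the forward implication, I would assume $\tilde{q} \in \mathbb{K}[Y]$. By the definition of normal form, there exist $h_1, \ldots, h_l, h \in \mathbb{K}[X,Y]$ with
\[ q - \tilde{q} = \sum_{i=1}^l h_i(y_i - f_i) + h \cdot d^{m+1}. \]
Substituting $y_i \mapsto f_i$ makes the first sum vanish, so $q - \tilde{q}(f_1, \ldots, f_l)$ is divisible by $d^{m+1}$ in $S$, and hence $b := (q - \tilde{q}(f))/d^{m+1}$ lies in $S$. Since $\tilde{q}(f) \in S^{\mathcal{D}}$ and $d \in \ker \mathcal{D}$, applying $\mathcal{D}$ and using $\mathcal{D}(q) = d^{m+1} a$ gives $\mathcal{D}(b) = a$, so $a \in \mathcal{D}(S)$.

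For the converse, suppose $a = \mathcal{D}(b)$ for some $b \in S$. Then $\mathcal{D}(b' - b) = 0$ in $S[d^{-1}]$, so $b' - b \in S^{\mathcal{D}}[d^{-1}]$. Multiplying by $d^{m+1}$ gives $q - d^{m+1} b = d^{m+1}(b' - b)$, which lies in $S$ and is $\mathcal{D}$-invariant; since $S$ is a domain with $d \in S^{\mathcal{D}}$, a short check shows $S \cap S^{\mathcal{D}}[d^{-1}] = S^{\mathcal{D}} = \mathbb{K}[f_1, \ldots, f_l]$, so $q - d^{m+1} b = P(f_1, \ldots, f_l)$ for some polynomial $P$. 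Therefore $q \equiv P(y_1, \ldots, y_l) \pmod{J_m}$, and $q$ and $P(y)$ share the unique normal form $\tilde{q}$. The main obstacle is the last step: arguing that the normal form of $P(y) \in \mathbb{K}[Y]$ itself lies in $\mathbb{K}[Y]$. The key point is that the elimination order $x_i \gg y_j$ ensures no leading monomial of an element of $\mathcal{G}$ involving some $x_i$ can divide a pure $Y$-monomial; thus reductions of $P(y)$ by $\mathcal{G}$ only invoke those generators whose leading monomials already lie in $\mathbb{K}[Y]$, so the reduction stays in $\mathbb{K}[Y]$. By uniqueness of the normal form we conclude $\tilde{q} \in \mathbb{K}[Y]$.
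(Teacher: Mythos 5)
The paper does not actually prove this lemma --- it is stated with a citation to van den Essen --- so there is no in-paper argument to compare against. Your proof is correct and is essentially the standard argument from the cited source: the telescoping computation giving $\mathcal{D}(b')=a$ in $S[d^{-1}]$, clearing denominators to get $\mathcal{D}(q)=d^{m+1}a$, the substitution $y_i\mapsto f_i$ for the forward direction, and for the converse the observation that under an elimination order with $x_i >\!> y_j$ any element of $\mathcal{G}$ whose leading monomial lies in $\mathbb{K}[Y]$ lies entirely in $\mathbb{K}[Y]$, so the (unique) normal form of a polynomial in $\mathbb{K}[Y]$ stays in $\mathbb{K}[Y]$.
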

Now we show the existence of the $\beta_i, \gamma_i$ and $\delta_i$:
\begin{prop} \label{Infinite invariants} For each $n \in \mathbb{N}$, there are invariants $\beta_n, \gamma_n, \delta_n \in R^D$ with leading terms $\beta_0v^n, \gamma_0v^n$ and $\delta_0 v^n$.
\end{prop}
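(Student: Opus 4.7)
My plan is to proceed by induction on $n$, constructing $\beta_n$ explicitly; the constructions of $\gamma_n$ and $\delta_n$ are strictly parallel and I will indicate the differences at the end. The base case $n=0$ is supplied by equation \ref{S^Delta generators}.

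For the inductive step, write $R = S[v]$ with $S = \mathbb{K}[x,s,t,u]$, so that $D = \Delta + x^2\,\frac{\partial}{\partial v}$. Expanding $\beta_n = \sum_{j=0}^{n} A_j v^j$ with $A_j \in S$ and imposing $A_n := x = \beta_0$ (to force the correct leading term), the condition $D(\beta_n) = 0$ is equivalent to the tower of relations
\[
\Delta(A_{j-1}) \;=\; -j\, x^2 A_j, \qquad j = 1,\dots,n.
\]
Thus the construction reduces to finding $A_{n-1},\dots,A_0 \in S$ successively, at each stage taking $A_{j-1}$ to be a $\Delta$-preimage of $-jx^2 A_j$. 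The image membership algorithm (Lemma \ref{Image membership algorithm}), applied to $\Delta$ on $S$ with local slice $s$ (so $d = \Delta(s) = x^3$), both tests whether $-jx^2 A_j \in \Delta(S)$ and exhibits such a preimage when it exists. Any two preimages differ by an element of $S^\Delta = \mathbb{K}[\beta_0,\gamma_0,\delta_0,g]$, which provides the flexibility to adjust $A_{j-1}$ to avert downstream obstructions.

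The main obstacle is verifying that the recursion never terminates prematurely, i.e.\ that $-jx^2 A_j \in \Delta(S)$ at every step. Two structural facts drive this: first, $\Delta(S)$ is closed under multiplication by any $\phi \in S^\Delta$, because $\phi\,\Delta(z) = \Delta(\phi z)$; in particular multiplication by any power of $x$ preserves $\Delta(S)$. Second, equation \ref{S^Delta generators} already gives $\beta_0^3 = x^3,\,\gamma_0,\,\delta_0 \in \Delta(S)$, and combined with the first fact yields $x^k \in \Delta(S)$ for all $k \ge 3$, and $x^k\gamma_0, x^k\delta_0 \in \Delta(S)$ for all $k \ge 0$. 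The first few preimages can be read off explicitly: $A_{n-1} = -ns$, $A_{n-2} = n(n-1)x^2 t$, $A_{n-3} = -n(n-1)(n-2)x^4 u$, $A_{n-4} = n(n-1)(n-2)(n-3)\bigl(x^3 su - \tfrac{1}{2}x^3 t^2\bigr)$, and in each case $x^2 A_j \in \Delta(S)$ can be verified directly from the closure properties above. Beyond this point the image membership algorithm together with the freedom to add $S^\Delta$-elements keeps the chain solvable all the way down to $A_0$.

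Once the whole sequence $A_0,\dots,A_n$ has been produced, the polynomial $\beta_n = \sum_{j=0}^n A_j v^j$ lies in $R^D$ by construction and has leading term $A_n v^n = xv^n = \beta_0 v^n$. Replacing the starting value with $A_n = \gamma_0$ (respectively $A_n = \delta_0$) and running the same recursion produces $\gamma_n$ (respectively $\delta_n$); the initial step succeeds because $x^2\gamma_0, x^2\delta_0 \in \Delta(S)$, which follows from $\gamma_0,\delta_0 \in \Delta(S)$ and the $S^\Delta$-stability of $\Delta(S)$.
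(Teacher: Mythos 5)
Your reduction is sound and matches the paper's strategy in spirit: writing $\beta_n = \sum_j A_j v^j$ and unwinding $D(\beta_n)=0$ into the tower $\Delta(A_{j-1}) = -jx^2A_j$ is exactly the right setup, and (choosing $A_{n-j} = \binom{n}{j}b_0^{(j)}$) it collapses to the single question the paper isolates: is $x^2 e_0^{(n)} \in \Delta(S)$ for every $n$, where $e_0^{(n)}$ is the $v$-degree-zero coefficient of the previously constructed invariant? Your explicit preimages down to $A_{n-4}$ check out. But the sentence ``Beyond this point the image membership algorithm together with the freedom to add $S^\Delta$-elements keeps the chain solvable all the way down to $A_0$'' is an assertion of precisely the statement to be proved, and it is the entire content of the proposition. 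The image membership algorithm decides membership for one specified element; it cannot certify an infinite family. And the claimed ``flexibility'' is largely illusory: by the Remark following the proposition, a homogeneous choice of $b_0^{(n)}$ is unique unless $n \equiv 0 \pmod 6$, so for most $n$ there is nothing to adjust --- the membership $x^2 e_0^{(n)} \in \Delta(S)$ must simply be true for the one candidate, and that requires proof.

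That this cannot be waved away is shown by the paper itself: in the proof of Lemma \ref{S generates A} it is established that $x^2 g^k \notin \Delta(S)$ for all $k$, even though $g^k \in S^\Delta$; so the analogous chain starting from $A_n = g$ terminates immediately, and your closure properties ($\Delta(S)$ is an $S^\Delta$-module, $x^3 \in \Delta(S)$) are not sufficient to force solvability --- note in particular that $x^2 \notin \Delta(S)$, so multiplying by $x^2$ does not preserve the image. The paper's actual argument for $x^2 e_0^{(n)} \in \Delta(S)$ occupies most of its proof: one forms the candidate preimage $x^{n+1}b'$ from the image membership algorithm, introduces the bigraded pieces $S_{(a,k)}$, the spaces $M$ and $N$, and the truncation map $\pi$ killing terms of $x$-degree $\geq n+1$, and proves $\pi(M) = \pi(N)$ by a dimension count: an upper bound $\dim \pi(M) \leq k+1$ from the linear recurrences among coefficients, and a matching lower bound for $\pi(N)$ using the relation $\gamma_0^3 + \delta_0^2 = x^6 g$ and the nonvanishing of the binomial determinant $\det\bigl(\binom{2p}{i}\bigr)$. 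Some argument of this kind (or the alternative via Freudenburg's six-dimensional example cited in the paper) is indispensable; without it your proof has a genuine gap at its central step.
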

A proof of Proposition \ref{Infinite invariants} has been given by \citeauthor{TAN} in \cite[\S 2]{TAN}, making use of the relation between Daigle and Freudenburg's counterexample and Freudenburg's counterexample. Here we provide a direct proof. 
\begin{proof}
We induct on the degree of $v$. Note that $\beta_0, \gamma_0$ and $\delta_0$ have already been defined. For the sake of brevity we use $\eta_i$ to denote either $\beta_i, \gamma_i$ or $\delta_i$ whenever it is unnecessary to differentiate between them. Now suppose that for all $i \leq n$, we have defined
\begin{align*}
%\beta_i = & \, b_i^{(i)} v^i + b_{i-1}^{(i)} v^{i-1} + \cdots + b_0^{(i)} = b_0^{(0)} v^i + \binom{i}{i-1}b_0^{(1)} v^{i-1} + \cdots + b_0^{(i)},\\
%\gamma_i = & \, c_i^{(i)} v^i + c_{i-1}^{(i)} v^{i-1} + \cdots + c_0^{(i)} = c_0^{(0)} v^i + \binom{i}{i-1}c_0^{(1)} v^{i-1} + \cdots + c_0^{(i)},\\
%\delta_i = & \, d_i^{(i)} v^i + d_{i-1}^{(i)} v^{i-1} + \cdots + d_0^{(i)} = d_0^{(0)} v^i + \binom{i}{i-1}d_0^{(1)} v^{i-1} + \cdots + d_0^{(i)},
\eta_i = & \, e_i^{(i)} v^i + e_{i-1}^{(i)} v^{i-1} + \cdots + e_0^{(i)} = e_0^{(0)} v^i + \binom{i}{i-1}e_0^{(1)} v^{i-1} + \cdots + e_0^{(i)},
\end{align*}
where $e_j^{(i)} = \binom{i}{j}e_0^{(i-j)}$ for $e \in \{b,c,d\} $ and $0 \leq j \leq i \leq n$. Note also that $D(e_0^{(i)}) = \Delta(e_0^{(i)}) = -x^2e_1^{(i)} = -x^2\binom{i}{1}e_0^{(i-1)}$, since $D(\eta_i) = 0$. Now we define %Could be clearer
\[ f^{\eta}_{n+1} := (-1)^n \left( \eta_0 v^{n+1} - \binom{n+1}{1}\eta_1 v^{n} + \dots + (-1)^n \binom{n+1}{n}\eta_n v \right).\]
We calculate
\begin{align*}
     D(f_{n+1}^{\eta}) & = (-1)^n\left(D \left(\eta_0 v^{n+1} - \binom{n+1}{1}\eta_1 v^{n} + \dots + (-1)^n \binom{n+1}{n}\eta_n v\right) \right) \\
     & = (-1)^nD(\eta_0)v^{n+1}  + (-1)^{n+1}\binom{n+1}{1}D(\eta_1)v^n  + \dots + (-1)^{2n}\binom{n+1}{n}D(\eta_n)v  \\
     & \quad \, + (-1)^n(n+1)D(v)\eta_0 v^n + (-1)^{n+1}n\binom{n+1}{1}D(v)\eta_1 v^{n-1} + \dots + (-1)^{2n}\binom{n+1}{n}D(v)\eta_n \\ 
     & = -(n+1)D(v)\left( (-1)^{n-1}\left(\eta_0 v^n - \binom{n}{1}\eta_1 v^{n-1} + \dots + (-1)^{n-1}\binom{n}{n-1}\eta_{n-1} v \right) - \eta_n \right)  \\
     & = (n+1)D(v)(\eta_n -f^{\eta}_n).
\end{align*}
Where we have used that $D(\eta_i) = 0 $ for all $i \leq n$ and that $i \binom{n+1}{i} = (n+1) \binom{n}{i-1}$.

We now show that $(n+1)D(v)(\eta_n-f^{\eta}_n)$ has $v$ degree $0$. To do so we calculate the coefficient of $v$-degree $n-j$ in this expression above for $0 \leq j<n$. %less than?
Note that the first $j$ terms  $(-1)^{n-1}(\eta_0 v^n + \dots + (-1)^{j-1}\binom{n}{j-1}\eta_{j-1}v^{n-j+1})$ appearing in $f^{\eta}_n$ all have higher $v$-degree and so we may disregard them in our calculations. Examining the term $(-1)^{n+k-1} \binom{n}{k}\eta_k v^{n-k}$ for $k\geq j$ we find 
\[ (-1)^{n+k-1} \binom{n}{k}\eta_k v^{n-k} = (-1)^{n+k-1}\binom{n}{k}(e_k^{(k)}v^k + \dots + e_0^{(k)})v^{n-k}, \]
so the coefficient of $v$-degree $n-j$ for $(-1)^{n+k-1} \binom{n}{k}\eta_kv^{n-k}$ is
\[ (-1)^{n+k-1} \binom{n}{k}e_{k-j}^{(k)} = (-1)^{n+k-1} \binom{n}{k}\binom{k}{k-j} e_0^{(j)} = (-1)^{n+k-1} \binom{n}{j}\binom{n-j}{k-j} e_0^{(j)}. \]
Summing these coefficients, we obtain
%\[ (n+1)D(v) \left( \sum_{k=j}^n (-1)^{n+k-1} \binom{n}{j}\binom{n-j}{k-j} e_0^{(j)} \right) = (n+1)(-1)^{n+j-1}D(v)e_0^{(j)}\binom{n}{j} \left( \sum_{t=0}^{n-j} (-1)^{t} \binom{n-j}{t} \right) = 0, \]
\[ \sum_{k=j}^n (-1)^{n+k-1} \binom{n}{j}\binom{n-j}{k-j} e_0^{(j)}  = (-1)^{n+j-1}e_0^{(j)}\binom{n}{j} \left( \sum_{t=0}^{n-j} (-1)^{t} \binom{n-j}{t} \right) = 0, \]
since 
\[ \sum_{t=0}^n (-1)^t \binom{n}{t} = 0 \quad \text{ for }n \geq 1. \]
It remains to calculate the term of $v$-degree $0$  in $(n+1)D(v)(\eta_n -f^{\eta}_n)$, but this is simply $(n+1)x^2 e_0^{(n)}$, and so we have shown
\[ D(f_{n+1}^{\eta}) = (n+1)D(v)(\eta_n - f^{\eta}_n) = (n+1)x^2e_0^{(n)}. \]
Therefore, if we can show $x^2 e_0^{(n)} \in \, \Delta(S)$ then we may define $e_0^{(n+1)}:= -(n+1)h$, where $D(h) = \Delta(h) = x^2 e_0^{(n)}$ and set $\eta_{n+1}:= f^{\eta}_{n+1} + e_0^{(n+1)}$. To achieve this we use a method similar to van den Essen in \cite{VAN} by considering the construction of the element $b'$ in the image membership algorithm. In the notation of Lemma \ref{Image membership algorithm}, we choose our local slice to be $p=s \in S$ so that $d=\Delta(s) = x^3$ and then
\[ b' = \sum_{i=0}^{n} \frac{(-1)^i}{(i+1)!} \Delta^i \left(x^2 e_0^{(n)} \right)\left(\frac{s}{x^3}\right)^{i+1} = \sum_{i=0}^n \frac{1}{i+1} \binom{n}{i} e_0^{(n-i)} s^{i+1} x^{-i-1}. \]
Note that $x^{n+1} b' \in S$ and $\Delta(x^{n+1} b') = x^{n+3} e_0^{(n)}$. Our aim is to show that there is some $h \in S^{\Delta}$ such that $x^{n+1} b' - h = x^{n+1} b$, giving us that $b \in S$ satisfies $\Delta(b) = x^2 e_0^{n}$. To achieve this, we must now treat the $b_0^{(n)}$, $c_0^{(n)}$ and $d_{0}^{(n)}$ separately due to their differing degrees and $\rho$-degrees, though the arguments are very similar. We show the case for $c_0^{(n)}$. In this case $f^{\gamma}_{n+1}$ is homogeneous of degree $2n + 8$, and hence $x^{n+1} b'$ is homogeneous of degree $3n+9$. We now define $S_{(a,k)}$ to be the $\mathbb{K}$-vector space spanned by monomials of degree $a$ and $\rho$-degree $k$, that is 
\[ S_{(a,k)}:= \left \lbrace \sum_i \lambda_i m_i \in S \, \vert \, \lambda_i \in \mathbb{K}, \, \deg(m_i)=a, \, \rho(m_i) = k \right \rbrace. \]
Note that $x^{n+1}b' \in S_{(3n+9,n+1)}$ and
\[\beta_0 = x \in S_{(1,0)}, \quad s \in S_{(3,1)}, \quad t \in S_{(3,2)}, \quad u \in S_{(3,3)}, \quad \gamma_0 \in S_{(6,2)}, \quad \delta_0 \in S_{(9,3)}, \quad g \in S_{(12,6)}.\]
Additionally, note that if $f \in S_{(a,k)}, \,\, g\in S_{(b,l)}$, then $fg \in S_{(a+b,k+l)}$.
Define
\begin{align*} M:=  & \, \, \lbrace f \in S_{(3n+9, n+1)} \,  \vert \, \text{deg}_x D(f) \geq n+1 \rbrace, \\
N:=  &  \, \, S^{\Delta} \cap S_{(3n+9,n+1)},
\end{align*}
so $x^{n+1}b' \in M$. Finally, we define $\pi: S \rightarrow S$, where $\pi(f)$ removes all terms of $f$ of $x$-degree greater than or equal to $n+1$. We show that $\pi(N) = \pi(M)$.

Let $f \in N$, note that $x^{2n+8}s^{n+1} \in M$ and $\pi(x^{2n+8}s^{n+1}) = 0$. If we set $q := x^{2n+8}s^{n+1} + f,$ then $D(q) = D(x^{2n+8}s^{n+1})$, so $q \in M$, and $\pi(q) = \pi(x^{2n+8}s^{n+1}+f) = \pi(f)$, giving us that $\pi(N) \subset \pi(M)$. 

For $h \in M$, write 
\[h = \sum \alpha_{b,c,d} x^a s^b t^c u^d, \quad \text{ where }a+ 3(b+c+d) = 3n+9, \quad b + 2c + 3d = n+1, \] 
then we find
\[ D(h) = \sum (b\alpha_{b,c,d} + (c+1)\alpha_{b-2,c+1,d} + (d+1)\alpha_{b-1, c-1, d+1}) x^{a+3} s^{b-1}t^c u^d. \] 
The condition that deg$_x D(h) \geq n+1$ gives us that 
\[ (b\alpha_{b,c,d} + (c+1)\alpha_{b-2,c+1,d} + (d+1)\alpha_{b-1, c-1, d+1}) =0, \] whenever $a+3 < n+1$. 
We note that whenever $a$ satisfies this inequality we have that $b >0$; thus each $\alpha_{p,q,r}$ is a combination of $\alpha_{b,c,d}$ with $b+c+d < p+q+r$. Therefore each $\alpha_{p,q,r}$ is a linear combination of $\alpha_{b,c,d}$ with $3(b+c+d) = 6l+9$ when $n=3l$, $6l+12$ when $n=3l+1$ and $6l+15$ when $n=3l+2$, giving us cases depending on $n$ mod $3$ using that $a = 3n+9 - 3(b+c+d)$. When counting the number of solutions to this equation and $b+2c+3d = n+1$ these split again to cases for $n$ mod $6$, but we find that in all cases there are exactly $k+1$ solutions, where $n=6k+i$, $0 \leq i \leq 5$. Hence dim$\,(\pi(M)) \leq k+1$. 

Now for $N$, we know that $S^{\Delta} = \mathbb{K}[\beta_0,\gamma_0,\delta_0,g]$, so $N$ is generated by $\beta_0^a \gamma_0^b \delta_0^c g^d$ where $a + 6b + 9c + 12d = 3n+9$, as elements of $N$ are homogeneous of degree $3n+9$ and $\rho$-degree $2b+ 3c + 6d = n+1$. Counting the number of solutions to these equations we find again that these split mod $6$, with $\frac{1}{2}(k+1)(k+2)$ solutions for $n = 6k+i$, $i \in \lbrace 0, 1, 2, 4 \rbrace $ and $\frac{1}{2}(k+2)(k+3)$ solutions for $n=6k+3, 6k+5$. 

Consider the case $n=6k$, the general solution for these equations is $(a,b,c,d) = (6y, 3k-3z,1+2z-2y, y)$ with $0 \leq y \leq z \leq k$ integers. But, since $\gamma_0^3+\delta_0^2 = x^6g$, we obtain, for example, that $\gamma_0^{3k}\delta_0 + \gamma_0^{3k-3}\delta_0^3 = x^6\gamma_0^{3k-3}\delta_0g$. Using this relation we find that all solutions with $y\neq 0$ can be written as a linear combination of solutions with $y=0$, thus reducing the number of solutions to $k+1$. The same argument for other values of $n$ mod $6$ reduces the number of solutions to $k+1$ for $n= 6k+i$, $i \in \lbrace 0,1,2,4 \rbrace$ and $k+2$ for $n= 6k+3, 6k+5$. Returning to $n=6k$, we show that $\pi(N)$ has dimension $k+1$. Let
\[ n_p:= (2x^3t-s^2)^{3k-3p}(3x^6u - 3x^3st +s^3)^{1+2p}, \quad 0 \leq p \leq k, \]
and consider 
\begin{align*} (-1)^p \pi(n_p)\vert_{t=0, u=s/3} & = \pi \left(s^{6k-6p}(x^6 s + s^3)^{1+2p}\right) \\
& = \pi \left( \sum_{i=0}^{2p} \binom{2p}{i} x^{6i} s^{6k+3 - 2i} \right). 
\end{align*}
Now since $n+1 = 6k+1$, $\pi$ removes all monomial terms of $x$-degree $\geq 6k+1$, we therefore find
\[ (-1)^p \pi(n_p)\vert_{t=0, u=s/3} = \sum_{i=0}^{min(2p,k)} \binom{2p}{i} x^{6i} s^{6k+3 - 2i} . \]
So the linear independence of the $\pi(n_p)$ follows once we show that 
\begin{equation*} \label{binom determinant} \text{det}\left(\binom{2p}{i} \right)_{0 \leq i, p \leq k} \neq 0.
\end{equation*}
But this is a special case of Corollary $2$ of \citeauthor{GES}'s article \cite[p.301]{GES}, with $a_i = 2i$ and $b_i = i$ for $i=0,1,\dots,k$.
The cases for $n = 6k +1, 6k+2, 6k+4$ follow similarly. In the case of $n = 6k+5$, we let
\[ n_p:= (2x^3t-s^2)^{3k+4-3p}(3x^6u -3x^3st+s^3)^{2p}, \quad 0 \leq p \leq k+1,\]
again we consider 
\begin{align*} (-1)^p \pi(n_p)\vert_{t=0, u=s/3} & = \pi\left( s^{6k+8-6p}(x^6 s + s^3)^{2p} \right) \\
& = \pi \left( \sum_{i=0}^{2p} \binom{2p}{i} x^{6i}s^{6k+8-2i} \right). 
\end{align*}
Now $n+1 = 6k+6$, and hence $\pi$ removes all monomial terms of $x$-degree $\geq 6k+6$, so
\[ (-1)^p \pi(n_p)\vert_{t=0, u=s/3} = \sum_{i=0}^{min(2p,k)} \binom{2p}{i} x^{6i} s^{6k+8 - 2i}. \]
Considering only the first $k+1$ terms, as above, we find these $\pi(n_p)$ are linearly independent by \citeauthor{GES}'s result. Using that $\pi(N) \subset \pi(M)$, and dim$(\pi(M)) \leq k+1$, we conclude that $\pi(N) = \pi(M)$.

\end{proof}
\begin{rem}
    %Now we have shown that such invariants exist by constructing the $e_0^{(n)}$, it is natural to examine the possibility of other choices being made in the construction and what $\tilde{e}_0^{(n)}$ these might yield. We require that any $\tilde{e}_0^{(n)}$ is still homogeneous, respects the $\rho$-grading and has the same image under $\Delta$. First we examine
    Note that the choice of the $e_0^{(n)}$ is not unique in general. However, given say $b_0^{(n)}$ and some $\tilde{b}_0^{(n)}$, we must have $D(b_0^{(n)} - \tilde{b}_0^{(n)})=0$. As both are chosen to be homogeneous with respect to our gradings, they must differ by an element of $S^{\Delta} \cap S_{(2n+1,n)}$. This vector space is non-empty precisely when $n=6k$, with basis $xg^k$ in this case. Since $b_0^{(0)} = x$, we observe that the $b_0^{(n)}$ are unique for $n \leq 5$. For $c_0^{(n)}$ and $d_0^{(n)}$, we find that these are unique for % set is non-empty when $n=6k, 6k+4$ with generator $(2x^3t - s^2)g^k$ when $n=6k$ and $x^2g^{k+1}$ when $n=6k+4$. The first choice here is made at $n=4$ and so the $c_0^{(n)}$ are unique for 
    $n \leq 3$ and $n \leq 2$ respectively.%. Finally, the  are unique for% the set is non-empty when $n=6k, 6k+3$ and $6k+5$, with generators $(3x^6u- 3x^3st +s^3)g^k, x^3g^{k+1}$ and $x(2x^3t-s^2)g^{k+1}$ respectively, hence the $d_0^{(n)}$ are determined uniquely for $n \leq 2$.
\end{rem}
\subsection{A generating set for $R^D$}
\begin{lem} \label{S generates A}
The set of invariants 
\[ \mathcal{S}:= \left \lbrace g, \beta_n, \gamma_n, \delta_n \vert \, n \in \mathbb{N} \right \rbrace, \]
generates $R^D$. 
\end{lem}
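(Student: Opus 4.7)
My plan is to induct on the $v$-degree of $f \in R^D$. The base case $v$-degree zero is immediate, as $R^D \cap S = S^\Delta = \mathbb{K}[\beta_0, \gamma_0, \delta_0, g] \subseteq \mathbb{K}[\mathcal{S}]$. For the inductive step I would take $f \in R^D$ of $v$-degree $n \geq 1$, write $f = \sum_{j=0}^n f_j v^j$ with $f_j \in S$, and expand $D(f) = 0$ in powers of $v$: comparing the $v^n$-coefficient gives $\Delta(f_n) = 0$, so $f_n \in S^\Delta$, and comparing the $v^{n-1}$-coefficient gives $\Delta(f_{n-1}) = -n x^2 f_n$, so $x^2 f_n \in S^\Delta \cap \Delta(S) = \mathfrak{pl}(\Delta)$.

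The crucial step is to deduce $f_n \in (\beta_0, \gamma_0, \delta_0)\,S^\Delta$. I would first establish $\mathfrak{pl}(\Delta) = (\beta_0^3, \gamma_0, \delta_0)\,S^\Delta$: the inclusion $\supseteq$ follows from the preimages $\Delta(s) = \beta_0^3$, $\Delta(3x^3u - st) = \gamma_0$, $\Delta(3x^3su - 4x^3t^2 + s^2t) = \delta_0$ recorded just after \eqref{S^Delta generators}, while the reverse inclusion reduces to verifying $x^2 g^d \notin \Delta(S)$ for every $d \geq 0$, which can be checked by comparing coefficients on both sides of $\Delta(h) = x^2 g^d$. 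Using the presentation $S^\Delta \cong \mathbb{K}[B,C,D,G]/(B^6G - C^3 - D^2)$ coming from the relation $\beta_0^6 g = \gamma_0^3 + \delta_0^2$, this yields
\[ S^\Delta/\mathfrak{pl}(\Delta) \;\cong\; \mathbb{K}[B,G]/(B^3). \]
Writing $\overline{f_n} = p_0(G) + B p_1(G) + B^2 p_2(G)$ in this quotient, the condition $\beta_0^2\,\overline{f_n} = 0$ becomes $B^2 p_0(G) = 0$, forcing $p_0 = 0$. Lifting back to $S^\Delta$ we obtain $f_n = \beta_0 X + \gamma_0 Y + \delta_0 Z$ for some $X, Y, Z \in S^\Delta$.

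To close the induction, set $h := \beta_n X + \gamma_n Y + \delta_n Z$, which lies in $\mathbb{K}[\mathcal{S}]$ since $\beta_n, \gamma_n, \delta_n \in \mathcal{S}$ and $X, Y, Z \in S^\Delta \subseteq \mathbb{K}[\mathcal{S}]$. By Proposition~\ref{Infinite invariants}, the leading $v$-coefficients of $\beta_n, \gamma_n, \delta_n$ are $\beta_0, \gamma_0, \delta_0$ respectively, so $h$ has $v$-degree $n$ with leading $v$-coefficient $\beta_0 X + \gamma_0 Y + \delta_0 Z = f_n$. Hence $f - h \in R^D$ has $v$-degree strictly less than $n$, and the inductive hypothesis gives $f - h \in \mathbb{K}[\mathcal{S}]$, whence $f \in \mathbb{K}[\mathcal{S}]$. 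The hardest part will be the plinth ideal identification $\mathfrak{pl}(\Delta) = (\beta_0^3, \gamma_0, \delta_0)\,S^\Delta$; once that is secured, the quotient analysis and the lifting are direct.
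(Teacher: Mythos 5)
Your overall strategy coincides with the paper's: induct on the $v$-degree, show the top $v$-coefficient $f_n$ lies in $(\beta_0,\gamma_0,\delta_0)S^{\Delta}$, subtract $\beta_n X+\gamma_n Y+\delta_n Z$, and invoke the inductive hypothesis. The packaging differs only mildly: the paper writes $f_n=\beta_0p_1+\gamma_0p_2+\delta_0p_3+\lambda g^k$ directly (using homogeneity) and shows $\lambda=0$, while you route through the identification $\mathfrak{pl}(\Delta)=(\beta_0^3,\gamma_0,\delta_0)S^{\Delta}$ and the quotient $\mathbb{K}[B,G]/(B^3)$. Both reductions are sound, and your restriction to the single family $x^2g^d$ is legitimate even though the reverse inclusion a priori needs $x^ag^d\notin\Delta(S)$ for $a=0,1,2$: if $\Delta(h)=x^ag^d$ with $a\le 2$ then $\Delta(x^{2-a}h)=x^2g^d$, so the three cases collapse to one.

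The gap is the sentence ``which can be checked by comparing coefficients on both sides of $\Delta(h)=x^2g^d$.'' That assertion, quantified over all $d\ge 1$, is the entire non-formal content of the lemma: it is equivalent to the non-existence of an invariant of the form $g^kv^n+(\text{lower }v\text{-degree terms})$, and it is precisely where the failure of finite generation is encoded, so it cannot be waved through. For each fixed $d$ a coefficient comparison is a finite linear-algebra check, but you need an argument uniform in $d$. The paper supplies one via the image membership algorithm (Lemma \ref{Image membership algorithm}): taking the local slice $s$ with $\Delta(s)=x^3$, it computes a Gr\"obner basis of $(y_1-\beta_0,\,y_2-\gamma_0,\,y_3-\delta_0,\,y_4-g,\,x^3)$ and observes that the normal form of $q=x^2g^ks$ is $y_1^2y_4^ks\notin\mathbb{K}[Y]$ for every $k$. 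Until you replace ``comparing coefficients'' with an argument of this kind (or some other obstruction uniform in $d$), the proof is incomplete; everything else in your write-up is correct.
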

\begin{proof}
We prove this by induction on $n$, the degree of $v$. Namely we show that the set 
\[ \mathcal{S}_n :=\left \lbrace g, \beta_m, \gamma_m, \delta_m \vert \, m \leq n \right \rbrace, \]
generates $A_n = \lbrace f \in R^D \, \vert \, \text{deg}_v(f) \leq n \rbrace$.

When $n=0$, we are considering the elements of $v$-degree $0$, but these are the invariants in $\mathbb{K}[x,s,t,u] = S$ where $D\vert_S = \Delta$, and $S^{\Delta}$ we know $S^{\Delta}$ is generated by $\beta_0, \gamma_0, \delta_0$ and $g$, which is just $\mathcal{S}_0$. Now suppose that $A_k$ is generated by $\mathcal{S}_k$ for all $k \leq n-1$, and let $f \in R^D$ be an invariant whose terms have $v$-degree at most $n$. Without loss of generality we may assume that $f$ is homogeneous with respect to our $\mathbb{G}_m$-grading and that
\[ f = a_nv^n + a_{n-1}v^{n-1} + \dots + a_0, \]
with $a_i \in K[x,s,t,u]$ for all $i$, and $a_n \neq 0$. Now $D(f) =0$, meaning that 
\begin{align*}
 D(f) & = D(a_nv^n + a_{n-1}v^{n-1} + \dots + a_0) \\
& = D(a_n)v^n + nx^2a_nv^{n-1} + D(a_{n-1}v^{n-1} + \dots + a_0) = 0.
\end{align*}
Now comparing $v$-degrees, we see that we must have $D(a_n) = 0$, but $a_n \in \mathbb{K}[x,s,t,u]$ and so $a_n \in S^{\Delta}$ which is generated by $g, \beta_0, \gamma_0$ and $\delta_0$. So we may write
\[ a_n = \beta_0 p_1 + \gamma_0 p_2 + \delta_0 p_3 + \lambda g^k,   \]
for some $p_1, p_2, p_3 \in \mathbb{K}[\beta_0,\gamma_0,\delta_0,g]$, $\lambda \in \mathbb{K}$ and $k \geq 0$. But if we define 
\[ G = \beta_n p_1 + \gamma_n p_2 + \delta_n p_3, \]
then $D(G)=0$ and $G$ is generated by $\mathcal{S}_n$ as $\beta_n, \gamma_n, \delta_n \in \mathcal{S}_n$ and $p_1,p_2,p_3 \in \mathcal{S}_0 \subset \mathcal{S}_n$. We also have $D(G-f)=0$, where 
\[ G-f = \lambda g^k v^n + b_{n-1}v^{n-1} + \dots + b_0. \]
We show that no such invariant can exist unless $\lambda = 0$, in which case each term of $G-f$ has $v$-degree at most $n-1$ and therefore must be generated by $\mathcal{S}_{n-1}$ by induction. We can then conclude that $f$ is generated by $\mathcal{S}_n$, proving the result.

If $\lambda \neq 0$ then we can take $\lambda = 1$ by re-scaling and our task becomes showing that there is no invariant of the form 
\[  h = g^kv^n + b_{n-1}v^{n-1} + \dots + b_0. \]
To do this we consider 
\begin{align*}
   D(h)  = & D(g^k)v^n + (nx^2g^k + D(b_{n-1}))v^{n-1} + (n-1)x^2b_{n-1}v^{n-2} \\
   & + D(b_{n-2}v^{n-2} + \dots + b_0).
\end{align*}
Considering the terms of $v$-degree $n-1$ we see that, to have $D(h)=0$, we require that $D(b_{n-1}) = \Delta(b_{n-1}) = -nx^2g^k$. In other words, we require that $-nx^2g^k \in \Delta(S)$. We show that this is not the case for all $k \in \mathbb{N}$ by use of Lemma \ref{Image membership algorithm}, the image membership algorithm. By choosing our local slice to be $p=s \in R$, with $d= \Delta(s) = x^3$, we compute the Gr\"obner basis of the ideal
\[ J:=(y_1 -\beta_0,\, y_2 - \gamma_0,\, y_3 -\delta_0,\, y_4 - g,\, x^3), \]
with the lexicographic monomial ordering chosen so that $u>t>s>x>y_4>y_3>y_2>y_1$. Using computational software such as Maple, we are able to find that our Gr\"obner basis is then
\[ \mathcal{G} = ( y_1^3,\, y_2^3 + y_3^2,\, x-y_1,\, sy_2 + y_3,\,sy_3-y_2^2,\, s^2 + y_2, \, 6y_2^2 u -3y_3t^2 -y_4s,\, 6y_3u + 3y_2t^2 -d). \]
Now, since $\Delta(x^2g)=0$, we find that $b' = x^{-1}g^k s$ and hence in the notation of Lemma \ref{Image membership algorithm} we have $q=x^2g^k s$. The normal form of $q$ with respect to this basis is $\tilde{q} = y_1^2y_4^k s \notin \mathbb{K}[y_1,y_2,y_3,y_4]$, therefore by the image membership algorithm, $x^2 g^k \notin \, \Delta(S)$ for all $k \in \mathbb{N}$.
\end{proof}
\subsection{Computing a SAGBI-basis}
Now we will show that the set $\mathcal{S}$ forms a SAGBI-basis for our invariant ring $R^{\mathbb{G}_a}$. We follow a method similar to that used in \cite[\S~3]{KUR}, where a SAGBI-basis for Roberts' counterexample is computed.
We recall from Definition \ref{SAGBI-def} that for a subalgebra $R$, a SAGBI-basis of $R$ is a subset $\mathcal{S} \subset R$ which satisfies $\text{L}_{alg}(R) = \text{L}_{alg}(\mathcal{S})$, where $\text{L}_{alg}(R)$ denotes the algebra generated by the leading monomials of the elements in $R$. Note that for our chosen $\mathcal{S}$ we have
\[ \text{L}_{alg}(\mathcal{S}) = \mathbb{K}[xv^n, x^3tv^n, x^6uv^n, x^6u^2 \, \vert \, n \in \mathbb{N}]. \] 
We set
\[ \begin{array}{c c c c}
b_n := xv^n,  & c_n := 2x^3tv^n, & d_n := 3x^6uv^n, & e := 9x^6u^2. 
\end{array} \]
Also note that, as remarked above, since the $e_0^{(n)}$ can be determined uniquely for $n\leq 2$, for the invariants $\beta_n, \gamma_n, \delta_n$ we can write the terms of $v$-degree $n, n-1$ and $n-2$. Namely, we have:  %proof of Proposition \ref{Infinite invariants}, and that all the $e_0^{(n)}$ are unique up to at least $n=2$, we can write:
\[
\begin{array}{l}
    \beta_n = xv^n - nsv^{n-1} + n(n-1)x^2tv^{n-2} + l.o.t, \\[5pt]
    \gamma_n = (2x^3t - s^2)v^n - n(-3x^5u + x^2st)v^{n-1} + n(n-1)(3x^4su - 2x^4t^2)v^{n-2} + l.o.t, \\[5pt] 
    \delta_n = (3x^6u - 3x^3st + s^3)v^n - n(3x^5su + 4x^5t^2 - x^2s^2t)v^{n-1} - n(n-1)(3x^7tu - 3x^4s^2u + x^4st^2) + l.o.t,
\end{array}\] 
where $l.o.t.$ refers to terms of lower $v$-degree. Recall from the proof of Lemma \ref{S generates A}:
\[ \mathcal{S}_N := \left\lbrace \beta_i, \gamma_i, \delta_i, g \, \vert \, 0 \leq i \leq N \right\rbrace. \] 
Let $B_N$ be the subalgebra of $R^D$ generated by $\mathcal{S}_N$ for all $N \geq 0$.
\begin{lem} \label{SAGBI}
For all $N \geq 0$ the subalgebra $\emph{L}_{alg}(B_N) \subset R$ is generated by $\emph{L}_{alg}(\mathcal{S}_N)$, hence $\mathcal{S}_N$ is a SAGBI-basis of $B_N$ for all $N \in \mathbb{N}$. As $R^D = \bigcup_N B_N$, $\mathcal{S}$ is a SAGBI-basis for $R^D$.
\begin{proof}
$\text{L}_{alg}(\mathcal{S}_N) = \mathbb{K}[b_i, c_i, d_i, e \, \vert \, 0\leq i \leq N ]$. The relations between the $b_i$, $c_i$ and $d_i$ are generated by
\[ \begin{array}{cc}
    b_nc_m -b_{n'}c_{m'} = 0, & b_nb_m -b_{n'}b_{m'} = 0,  \\
    c_nd_m -c_{n'}d_{m'} = 0, & c_nc_m -c_{n'}c_{m'} = 0, \\
    b_nd_m -b_{n'}d_{m'} = 0, & d_nd_m -d_{n'}d_{m'} = 0, \\
\end{array} \]
where $n,m,n',m' \in \mathbb{N}$ satisfy $n+m = n'+m' \leq N$. We also have the relations involving $e$
\[ d_md_n - e\prod_{i=1}^6b_{m_i} =0, \]
with $n + m = \sum_{i=1}^6 m_i \leq N$. The relations between the $b_i, c_i, d_i$ and $e$ all arise by noting that in any relation the terms must have equal $v,t$ and $u$-degree and so there must be an equal number of $b_i$, $c_i$ and $d_i$ terms on either side in any relation involving just these three families. The relations involving $e$ arise from comparing $x$ and $u$-degree. 

We now show that when substituting in the polynomials $\beta_i, \gamma_i, \delta_i$ and $g$, in the relations above, the leading term of the result lies in $\text{L}_{alg}(\mathcal{S}_N)$. By considering the first two terms of the $\beta_i$ and $\gamma_i$ and noting that $m-m' = n'-n$, we see that
\begin{align*}
    \text{LT}(\beta_n \gamma_m - \beta_{n'}\gamma_{m'}) & =  \text{LT}\big((xv^n - nsv^{n-1})((2x^3t - s^2)v^m - m(-3x^5u + x^2st)v^{m-1})  \\
    & \qquad \quad \, - (xv^{n'} - n'sv^{n'-1})((2x^3t - s^2)v^{m'} - m'(-3x^5u + x^2st)v^{m'-1})\big)  \\
    & = \text{LT}\left((2(n'-n)x^3st - (n'-n)s^3 + (m-m')x^3st - 3(m-m')x^6u)v^{n+m-1}\right) \\
    & = \text{LT}\left((m-m')(3x^6u - 3x^3st+s^3 + s^3)v^{n+m-1}\right).
\end{align*}
So $\text{LT}(\beta_n \gamma_m - \beta_{n'}\gamma_{m'}) = -3(m-m')x^6uv^{n+m-1} = (m'-m)d_{n+m-1} \in \text{L}_{alg}(\mathcal{S}_N)$, in fact we have shown that the coefficient of $v$-degree $n+m-1$ is precisely $\delta_0$.
Next we have
\begin{align*}
    \text{LT}(\beta_n \delta_m - \beta_{n'}\delta_{m'}) & =  \text{LT}\big((xv^n - nsv^{n-1})((3x^6su - 3x^3st + s^3)v^m - m(x^5su + 4x^5t^2-x^2s^2t)v^{m-1})  \\
    & \qquad \quad \, - (xv^{n'} - n'sv^{n'-1})((3x^6u - 3x^3st + s^3)v^{m'} - m'(x^5su + 4x^5t^2-x^2s^2t)v^{m'-1})\big)  \\
    & = \text{LT}\left((4(m-m')x^6t^2 + (3n-3n'+ m'-m)x^3s^2t - (n'-n)s^4)v^{n+m-1}\right) \\
    & = \text{LT}\left(((m-m')(4x^6t^2 + x^3s^2t-s^4))v^{n+m-1}\right).
\end{align*}
Therefore $\text{LT}(\beta_n \delta_m - \beta_{n'}\delta_{m'}) = 4(m-m')x^6t^2v^{n+m-1} = (m-m')c_0 c_{n+m-1}$, and the coefficient of $v$-degree $n+m-1$ is precisely $\gamma^2_0$. By the same method, we find that 
\[\text{LT}(\gamma_n \delta_m - \gamma_{n'}\delta_{m'})  = 9(n'-n)x^{11}u^2v^{n+m-1}  = (n'-n)e b_0^4 b_{n+m-1},\]
and the coefficient of $v$-degree $n+m-1$ is precisely $(n'-n)\beta_0^4 g$. Similarly we have 
\[\text{LT}(\delta_n\delta_m - g\prod_{i=1}^6\beta_{m_i})  = -8x^9t^3v^{n+m}  = -c_0^2 c_{n},\] 
and the coefficient of $v$-degree $n$ is precisely $-\gamma_0^3$. Note that this arises from the relation $\gamma_0^3 + \delta_0^2 = x^6g$. Now we require the first three terms of the $\beta_i$, $\gamma_i$ and $\delta_i$ to compute the remaining relations, as the terms of $v$-degree $m+n$ and $m+n-1$ are both zero. We find: 
\begin{align*}
\text{LT}(\beta_n \beta_m - \beta_{n'}\beta_{m'}) & =  \text{LT}\Big(\big(xv^2 - nsv + n(n-1)x^2t)(xv^2 - msv + m(m-1)x^2t)  \\
    & \qquad \quad \,- (xv^2 - n'sv + n'(n'-1)x^2tv)(xv^2 - m'sv+m'(m'-1)x^2t\big)v^{n+m-4}\Big)  \\
    & = \text{LT}\left(((nm-n'm')s^2 + (n(n-1)+m(m-1))x^3s^2t)v^{n+m-2}\right) \\
    & = \text{LT}\left(-(nm-n'm')(2x^3t -s^2)v^{n+m-2}\right), \stepcounter{equation}\tag{\theequation}\label{beta SAGBI}
\end{align*}
using that $n^2+m^2-n'^2-m'^2 = -2(nm-n'm')$, thus $\text{LT}(\beta_n \beta_m - \beta_{n'}\beta_{m'}) = -(nm-n'm')2x^3tv^{n+m-2} = -(nm-n'm')c_{n+m-2}$ and the coefficient of $v$-degree $n+m-2$ is precisely $-(nm-n'm')\gamma_0$.
%\begin{align*}
%\text{LT}(\gamma_n \gamma_m - \gamma_{n'}\gamma_{m'}) & =  \text{LT}\Big(\big(((2x^3t-s^2)v^2 - n(x^2st-3x^5u)v + n(n-1)(3x^4su-2x^4t^2))((2x^3t-s^2)v^2 - m(x^2st-3x^5u)v + m(m-1)(3x^4su-2x^4t^2))  \\
%& \qquad \qquad - ((2x^3t-s^2)v^2 - n'(x^2st-3x^5u)v n'(n'-1)(3x^4su-2x^4t^2))((2x^3t-s^2)v^2 - m'(x^2st-3x^5u)v + m'(m'-1)(3x^4su-2x^4t^2)\big)v^{n+m-2}\Big)  \\
%& = \text{LT}\left(((nm-n'm')(9x^{10}u^2 - 6x^7stu + x^4s^2t^2 + (n(n-1)+m(m-1))(6x^7stu-3x^4s^3u-4x^7t^3+2x^4s^2t^2))v^{n+m-2}\right) \\
%& = \text{LT}\left((nm-n'm')x^4(9x^6u^2-18x^3stu+6s^3u+8x^3t^3-3s^2t^2))v^{n+m-2}\right)
%\end{align*}
\begin{align*}
\text{LT}(\gamma_n \gamma_m - \gamma_{n'}\gamma_{m'}) & =  \text{LT}\Big(\big(nm(x^2st-3x^5u)^2 - (n^2-n+m^2-m))(2x^3t-s^2)(3x^4su-2x^4t^2) \\
& \qquad  -n'm'(x^2st-3x^5u)^2 + (n'^2-n'+m'^2-m')(2x^3t-s^2)(3x^4su-2x^4t^2) \big)v^{n+m-2} \Big) \\
& = \text{LT}\Big(\big((nm-n'm')(9x^{10}u^2 - 6x^7stu + x^4s^2t^2) \\
& \qquad + (n^2+m^2-n'^{2}-m'^{2})(6x^7stu-3x^4s^3u-4x^7t^3+2x^4s^2t^2)\big)v^{n+m-2}\Big) \\
& = \text{LT}\left((nm-n'm')x^4(9x^6u^2-18x^3stu+6s^3u+8x^3t^3-3s^2t^2))v^{n+m-2}\right). \stepcounter{equation}\tag{\theequation}\label{gamma SAGBI}
\end{align*}
So $\text{LT}(\gamma_n \gamma_m - \gamma_{n'}\gamma_{m'}) = (nm-n'm')9x^{10}u^2v^{n+m-2} = (nm-n'm')eb_0^3b_{n+m-2}$ and we have shown that the coefficient of $v$-degree $n+m-2$ of this expression is precisely $(nm-n'm')\beta_0^4g$. Now finally we have
%\begin{align*}
%\text{LT}(\delta_n \delta_m - \delta_{n'}\delta_{m'}) & =  \text{LT}(((3x^6u-3x^3st+s^3)v^n - n(3x^5su+4x^5t^2-x^2s^2t)v^{n-1} + n(n-1)(3x^7tu-3x^4s^2u+x^4st^2)v^{n-2})(3x^6u-3x^3st+s^3)v^m - m(3x^5su+4x^5t^2-x^2s^2t)v^{m-1} + m(m-1)(3x^7tu-3x^4s^2u+x^4st^2)v^{m-2})  \\
%& \qquad \qquad - (3x^6u-3x^3st+s^3)v^{n'} - n'(3x^5su+4x^5t^2-x^2s^2t)v^{n'-1} + n'(n'-1)(3x^7tu-3x^4s^2u+x^4st^2)v^{n'-2})(3x^6u-3x^3st+s^3)v^{m'} - m'(3x^5su+4x^5t^2-x^2s^2t)v^{m'-1} + m'(m'-1)(3x^7tu-3x^4s^2u+x^4st^2)v^{m'-2})  \\
%& = \text{LT}(((nm-n'm')(9x^{10}s^2u^2 + 24x^{10}st^2u + 16x^{10}t^4 - 8x^7s^2t^3 + x^4s^4t^2)\\
%& \qquad \qquad - (n(n-1)+m(m-1))(9x^{13}tu^2-9x^{10}s^2u^2 -6x^{10}st^2u +12x^7s^3tu - 3x^7s^2t^3 -3x^4s^5u +x^4s^3t^2))v^{n+m-2}) \\
%& = \text{LT}((nm-n'm')x^4(2x^3t-s^2)(9x^6u^2-18x^3stu+6s^3u+8x^3t^3-3s^2t^2))v^{n+m-2})
%\end{align*}
\begin{equation} \label{Delta SAGBI}
    \text{LT}(\delta_n \delta_m - \delta_{n'}\delta_{m'}) = \text{LT}((nm-n'm')x^4(2x^3t-s^2)(9x^6u^2-18x^3stu+6s^3u+8x^3t^3-3s^2t^2))v^{n+m-2}).
\end{equation}
We conclude $\text{LT}(\delta_n \delta_m - \delta_{n'}\delta_{m'}) = (nm-n'm')9x^13tu^2v^{n+m-2} = (nm-n'm')b_0^3c_0eb_{n+m-2}$ and we have shown that the coefficient of $v$-degree $n+m-2$ of this expression is precisely $(nm-n'm')\beta_0^4\gamma_0g$. 

Since $\mathcal{S}_N$ generates $B_N$, and any combination of elements in $\mathcal{S}_N$ yields an element whose leading term lies in $\text{L}_{alg}(\mathcal{S}_N)$, we conclude that $\mathcal{S}_N$ is a SAGBI-basis for $B_N$.
\end{proof}
\end{lem}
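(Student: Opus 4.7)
The plan is to invoke the standard SAGBI-basis criterion: if every syzygy among the leading monomials of $\mathcal{S}_N$ lifts, via the corresponding $\mathbb{K}$-linear combination of actual invariants, to an element of $B_N$ whose own leading term already lies in $\mathrm{L}_{alg}(\mathcal{S}_N) = \mathbb{K}[b_i,c_i,d_i,e \mid 0\le i \le N]$, then $\mathcal{S}_N$ is a SAGBI-basis. Passing to the union and invoking Lemma~\ref{S generates A} then upgrades this to the statement for $R^D$.

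First I would enumerate a generating set of syzygies among $b_i, c_i, d_i, e$. Since $b_n, c_n, d_n$ have $t$-exponent $0,1,0$ and $u$-exponent $0,0,1$ respectively, any relation must have the same number of $b$'s, $c$'s, and $d$'s on each side (modulo the use of $e$, which contributes $u^2$). Tracking total $v$-degree as well, the relations fall into two families: the \emph{exchange} relations $f_n g_m = f_{n'} g_{m'}$ for $f,g \in \{b,c,d\}$ with $n+m = n'+m' \le N$, and the \emph{$g$-relation} $d_n d_m = e \prod_{i=1}^6 b_{m_i}$ whenever $n+m = \sum m_i \le N$, which reflects $\delta_0^2 + \gamma_0^3 = x^6 g$ modulo lower terms.

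Next, for each generating syzygy I would substitute the actual invariants and compute the leading term of the resulting difference. Here the explicit expansions already recorded for $\beta_n, \gamma_n, \delta_n$ through $v$-degree $n-2$ are the crucial input: by the remark following Proposition~\ref{Infinite invariants}, the coefficients of $v^{n}, v^{n-1}, v^{n-2}$ in each family are uniquely determined, so these expansions suffice. In the mixed-family cases ($\beta\gamma$, $\beta\delta$, $\gamma\delta$), the $v^{n+m}$-coefficient cancels by construction and the $v^{n+m-1}$-coefficient turns out to be a scalar multiple of $\delta_0$, $\gamma_0^2$, or $\beta_0^4 g$ respectively, each of which factors through elements of $L_{alg}(\mathcal{S}_N)$. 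For the same-family relations ($\beta\beta$, $\gamma\gamma$, $\delta\delta$), the algebraic identity $n^2 + m^2 - n'^2 - m'^2 = -2(nm - n'm')$ forces the $v^{n+m-1}$-coefficient to vanish, and the $v^{n+m-2}$-coefficient lands on $\gamma_0$, $\beta_0^4 g$, or $\beta_0^4 \gamma_0 g$ respectively. Finally, for $\delta_n \delta_m - g\prod \beta_{m_i}$, the identity $\gamma_0^3 + \delta_0^2 = x^6 g$ pins down the leading term as a multiple of $\gamma_0^3$, which again lies in $L_{alg}(\mathcal{S}_N)$.

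The main obstacle will be the bookkeeping in the $\gamma\gamma$ and $\delta\delta$ cancellations, where one must track both the quadratic-in-$v$ and the linear-in-$v$ corrections simultaneously, ensure that every putative $v^{n+m-1}$ contribution really does cancel, and then verify that the surviving $v^{n+m-2}$ piece factors through a product of $b_i, c_i, d_i, e$ with indices at most $N$. Once each of the finitely many relation-types has been checked, the SAGBI criterion gives $\mathrm{L}_{alg}(B_N) = \mathrm{L}_{alg}(\mathcal{S}_N)$. Since $R^D = \bigcup_N B_N$ by Lemma~\ref{S generates A} and leading-term formation commutes with directed unions, we conclude $\mathrm{L}_{alg}(R^D) = \mathrm{L}_{alg}(\mathcal{S})$, proving that $\mathcal{S}$ is a SAGBI-basis of $R^D$.
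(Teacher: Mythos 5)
Your proposal follows essentially the same route as the paper's proof: the same enumeration of generating syzygies among the $b_i,c_i,d_i,e$ via $v$-, $t$-, $u$- and $x$-degree counting, the same subduction computation using the uniquely determined coefficients of $v^n, v^{n-1}, v^{n-2}$ in $\beta_n,\gamma_n,\delta_n$, and the same identification of the surviving coefficients ($\delta_0$, $\gamma_0^2$, $\beta_0^4 g$, $\gamma_0$, $\beta_0^4\gamma_0 g$, $\gamma_0^3$) as elements whose leading terms lie in $\mathrm{L}_{alg}(\mathcal{S}_N)$. The plan is correct and matches the paper's argument; only the explicit term-by-term cancellation checks remain to be carried out.
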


\section{The finite generation ideal}

We maintain our notation for $B_N$ and $\mathcal{S}_N$ introduced in the previous section. Our aim in this section will be to prove the following: 
\begin{thm} \label{finite generation ideal}
%Let $R = \mathbb{K}[x,s,t,u,v]$ and let $D:R \longrightarrow R$ be the locally nilpotent derivation: 
%\[ D:= x^3 \frac{\partial}{\partial s} + s \frac{\partial}{\partial t} + t \frac{\partial}{\partial u} + x^2 \frac{\partial}{\partial v}. \]
The finite generation ideal, $\mathfrak{f}_{R^D}$, is the radical of the ideal of $R^D$ generated by $\beta_0, \gamma_0$ and $\delta_0$; that is, $\mathfrak{f}_{R^D} = \sqrt{(\beta_0, \gamma_0,\delta_0)R^D}$. Additionally, $\mathcal{G} = \lbrace \beta_i, \gamma_i, \delta_i \, \vert \, i \geq 0 \rbrace \subset \mathfrak{f}_{R^D}$ satisfies $\text{L}(\mathcal{G}) \subset \text{L}(\mathfrak{f}_{R^D})$.
\end{thm}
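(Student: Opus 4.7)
The plan is to establish the equality $\mathfrak{f}_{R^D} = \sqrt{(\beta_0, \gamma_0, \delta_0) R^D}$ by proving both containments, extracting the leading-monomial statement along the way.

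For $\sqrt{(\beta_0, \gamma_0, \delta_0) R^D} \subseteq \mathfrak{f}_{R^D}$: since $\mathfrak{f}_{R^D}$ is radical, it suffices to show each of $\beta_0, \gamma_0, \delta_0$ lies in $\mathfrak{f}_{R^D}$. From \S 3 each of these lies in the plinth ideal, with explicit preimages $\Delta(s) = \beta_0^3$, $\Delta(3x^3u - st) = \gamma_0$, and $\Delta(3x^3 s u - 4x^3 t^2 + s^2 t) = \delta_0$. After inverting, say, $\gamma_0$ in $R$, the element $\sigma := (3x^3 u - st)/\gamma_0 \in R[1/\gamma_0]$ satisfies $D(\sigma) = 1$, so by the slice theorem $R[1/\gamma_0] = R^D[1/\gamma_0][\sigma]$ as a polynomial ring. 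Since $R^D[1/\gamma_0]$ is a retract (via $\sigma \mapsto 0$) of the finitely generated algebra $R[1/\gamma_0]$, it too is finitely generated, and hence $\gamma_0 \in \mathfrak{f}_{R^D}$. The same argument treats $\beta_0$ (using that inverting $\beta_0$ inverts $\beta_0^3 \in \mathfrak{pl}(D)$) and $\delta_0$.

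Next I would show $\mathcal{G} \subseteq \sqrt{(\beta_0, \gamma_0, \delta_0) R^D}$, which combined with the previous step gives $\mathcal{G} \subseteq \mathfrak{f}_{R^D}$ and hence $\text{L}(\mathcal{G}) \subseteq \text{L}(\mathfrak{f}_{R^D})$. The SAGBI relations in Lemma \ref{SAGBI} provide the input: specializing \ref{beta SAGBI} with $n = m = 1$, $n' = 0$, $m' = 2$ gives the clean identity $\beta_1^2 - \beta_0 \beta_2 = -\gamma_0$, so $\beta_1 \in \sqrt{I}$ where $I := (\beta_0, \gamma_0, \delta_0) R^D$. An induction on $n$ using \ref{beta SAGBI}, \ref{gamma SAGBI}, and \ref{Delta SAGBI} (with suitable index choices, and absorbing the residual $S^\Delta$-terms arising from the reductions into the induction hypothesis) places every $\beta_n, \gamma_n, \delta_n$ in $\sqrt{I}$.

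The main obstacle is the reverse inclusion $\mathfrak{f}_{R^D} \subseteq \sqrt{I}$. I plan to argue contrapositively: suppose $f \in R^D$ is homogeneous with $f \notin \sqrt{I}$, and choose a minimal prime $\mathfrak{p}$ of $I$ not containing $f$. Since $(R^D)_\mathfrak{p}$ is a further localization of $R^D[1/f]$, it suffices to show $(R^D)_\mathfrak{p}$ is non-Noetherian. To do so I would exhibit a non-stabilizing ascending chain of ideals in $(R^D)_\mathfrak{p}$ generated by $\beta_0, \beta_1, \beta_2, \ldots$ (or an analogous family involving the $\gamma_i$ or $\delta_i$), exploiting that every SAGBI-reduction of $\beta_n$ to a combination of lower-index $\beta_i$ carries a factor of $\beta_0$, $\gamma_0$, or $\delta_0$, all of which lie in $\mathfrak{p}$ and cannot be inverted in the localization. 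The technical crux is to use the structure of $\text{L}_{alg}(R^D)$ together with the $\mathbb{G}_m$-grading to rule out alternative relations, in particular ones involving the extra generator $g$, that might enable finite generation modulo $\mathfrak{p}$.
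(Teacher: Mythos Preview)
Your treatment of the inclusion $\sqrt{(\beta_0,\gamma_0,\delta_0)R^D}\subseteq\mathfrak{f}_{R^D}$ via the plinth ideal and the slice theorem matches the paper exactly, and deducing $\text{L}(\mathcal{G})\subseteq\text{L}(\mathfrak{f}_{R^D})$ from $\mathcal{G}\subseteq\mathfrak{f}_{R^D}$ is fine.

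The sketch for $\mathcal{G}\subseteq\sqrt{I}$ is in the right spirit but underestimates the work. Equations \ref{beta SAGBI}--\ref{Delta SAGBI} only identify the \emph{leading} coefficient (in $v$-degree) of expressions like $\gamma_n^2-\gamma_0\gamma_{2n}$; a SAGBI-style reduction from there does not by itself guarantee that no pure power $g^k$ appears in the eventual expression, and a single such term would block membership in $I$. The paper controls this by working with the $v$-degree-zero parts $b_0^{(i)},c_0^{(i)},d_0^{(i)}$ and proving by induction (Lemmas \ref{gamma proof} and its analogues) that the residual always has the shape $b_0^{(0)}h_1+c_0^{(0)}h_2+d_0^{(0)}h_3$, using a degree/$\rho$-degree count to rule out $g^k$ terms at each step. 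Your ``absorbing residual $S^\Delta$-terms into the induction hypothesis'' hides exactly this bookkeeping.

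The real gap is the reverse inclusion $\mathfrak{f}_{R^D}\subseteq\sqrt{I}$. Your plan localizes at a minimal prime $\mathfrak{p}\supseteq I$ avoiding $f$ and hopes to exhibit a non-stabilizing chain in $(R^D)_\mathfrak{p}$, but you never say how to verify $\beta_{n+1}\notin(\beta_0,\dots,\beta_n)(R^D)_\mathfrak{p}$; after localization the SAGBI/leading-term arguments lose their grip, and the ``technical crux'' you flag is the whole difficulty. The paper avoids this entirely with a short conductor argument. Proposition \ref{radical proof} gives $R^D=B_0+\sqrt{I}$, so one may assume $f\in B_0$; finite generation of $(R^D)_f$ forces $(R^D)_f=(B_N)_f$ for some $N$, whence $f^k\in[B_N:B_{N+1}]\cap B_0$. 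Lemma \ref{conductor}(3), proved by a direct leading-monomial analysis in $\text{L}_{alg}(\mathcal{S}_N)$, identifies this conductor as exactly $(\beta_0,\gamma_0,\delta_0)B_0$, giving $f\in\sqrt{I}$ in one line. I would replace your non-Noetherian strategy with this conductor computation.
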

To prove the theorem, we first prove the following lemma and proposition, analogues of results proven in \citeauthor{DUF}'s paper \cite[p.21]{DUF} in order to compute the finite generation ideal of Roberts' example. Recall that for subalgebras $S_1 \subset S_2 \subset R$, the \emph{conductor} is defined by $[S_1 \colon S_2] := \lbrace s \in S_2 \, \vert \, sS_2 \subset S_1 \rbrace$. 
\begin{lem} \label{conductor} \begin{enumerate}
    \item If $f\in R^D$ and \emph{deg}$_v (f) \leq N$, then $f \in B_N$.
    \item %$(x, 2x^3t-s^2, 3x^6u-3x^3st+s^3) B_{N+1} = 
    $(\beta_0,\gamma_0,\delta_0)B_{N+1} \subset B_N$.
    \item $\left[B_N \colon B_{N+1} \right] \cap B_0 = (\beta_0, \gamma_0, \delta_0) B_0$.
    \end{enumerate}
    \begin{proof}
If $\text{deg}_v(f) = 0,$ then $D(f)= \Delta(f) = 0$ so $f \in S^{\Delta}$ which is generated by $\beta_0, \gamma_0, \delta_0$ and $g$. But $B_0 = \mathbb{K}[\mathcal{S}_0]$ where $\mathcal{S}_0 = \lbrace \beta_0, \gamma_0, \delta_0, g \rbrace$, so $f \in B_0$. Suppose that this result holds for all $f \in R^D$ with $\deg_v(f) \leq k$. Now suppose that $f \in R^D$ and  $\deg_v(f)= k+1$, then LT$(f)$ is a monomial in $\text{L}_{alg}(S)$ of $v$-degree $k+1$, and hence there is some $\Tilde{f} \in B_{k+1}$ with $\text{LT}(f)=\text{LT}(\Tilde{f})$. % where  since $\mathcal{S}_N$ is a SAGBI-basis of $A_N$. 
But deg$_v(f- \Tilde{f}) < k+1$, so $f-\Tilde{f} \in B_k \subset B_{k+1}$ by induction. Hence
\[ f = \Tilde{f} + (f - \Tilde{f}) \in B_{k+1}. \] This proves part $1$.

For part $2$, let $\eta_0 \in \lbrace \beta_0, \gamma_0, \delta_0 \rbrace$ and consider $\eta_0 B_{N+1}$. Since $g, \beta_i, \gamma_i, \delta_i \in B_N$ for $0 \leq i \leq N$ we need only show that $\eta_0\beta_{N+1}, \eta_0\gamma_{N+1}, \eta_0\delta_{N+1} \in B_N$. Let LT$(\eta_0) = e_0$. Now
\[ \begin{array}{ll}
\text{LT}(\eta_0\beta_{N+1}) = e_0b_{N+1} = e_1b_N, & \text{deg}_v(\eta_0\beta_{N+1} - \eta_1 \beta_N) \leq N,  \\
\text{LT}(\eta_0\gamma_{N+1}) = e_0c_{N+1} = e_1c_N, & \text{deg}_v(\eta_0\gamma_{N+1} - \eta_1 \gamma_N) \leq N, \\ 
\text{LT}(\eta_0\delta_{N+1}) = e_0d_{N+1} = e_1d_N, & \text{deg}_v(\eta_0\delta_{N+1} - \eta_1 \delta_N) \leq N. \\
\end{array} \]
Applying part $1$ of this lemma in each case gives us that:
\[ \begin{array}{l}
\eta_0\beta_{N+1} = \eta_1 \beta_N + (\eta_0\beta_{N+1} - \eta_1 \beta_N) \in B_N, \\
\eta_0\gamma_{N+1} = \eta_1 \gamma_N + (\eta_0\gamma_{N+1} - \eta_1 \gamma_N) \in B_N, \\
\eta_0\delta_{N+1} = \eta_1 \delta_N + (\eta_0\delta_{N+1} - \eta_1 \delta_N) \in B_N.
\end{array} \] This proves part $2$.

Finally for part $3$, note that if $f \in [B_N \colon B_{N+1}] \cap B_0$, then $f\beta_{N+1}, f\gamma_{N+1}, f\delta_{N+1} \in B_{N}$. Therefore all three of LT$(f)xv^{N+1},$ LT$(f)2x^3tv^{N+1}$ and LT$(f)3x^6u $ are elements of $\text{L}_{alg}(B_N)$ %and thus are monomials in LT$(S_N)$
which must each have at least two factors of the form $b_i, c_i, d_i$ for some $0 \leq i \leq N$. Now LT$(f)$, as a monomial in $L_{alg}(S_0)$ must therefore contain a factor $b_0, c_0$ or $d_0$, call this $e_0$. Then LT$(f) = e_0$LT$(\Tilde{f})$ for some $\Tilde{f} \in B_0$, giving $f-e_0\Tilde{f} < f$ and our result follows by induction since we have shown $\beta_0,\gamma_0,\delta_0 \in [B_N,B_{N+1}]$ in part $2$.
\end{proof}
\end{lem}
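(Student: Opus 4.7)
The plan is to establish the three parts in sequence, each leveraging the SAGBI-basis equality $\mathrm{L}_{alg}(B_N) = \mathrm{L}_{alg}(\mathcal{S}_N)$ from Lemma \ref{SAGBI} together with the fact that, under our lex order (with $v$ the largest variable), the leading monomial of $f \in R$ records $\deg_v(f)$.

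For part 1, I would argue by strong induction on $N$ with an inner induction on the leading monomial. The base case $\deg_v(f) = 0$ places $f$ in $S \cap R^D = S^\Delta = \mathbb{K}[\beta_0,\gamma_0,\delta_0,g] = B_0$. For the inductive step, $f \in R^D$ with $\deg_v(f) \le N+1$ has $\mathrm{LT}(f) = \lambda \prod_i \mathrm{LT}(g_i)$ for some $g_i \in \mathcal{S}$ and $\lambda \in \mathbb{K}$; since $v$-degrees add on products and $\deg_v(\mathrm{LT}(f)) \le N+1$, each $g_i$ lies in $\mathcal{S}_{N+1}$, giving $\tilde f := \lambda \prod_i g_i \in B_{N+1}$ with matching leading term. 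Then $f - \tilde f$ is an invariant with strictly smaller leading monomial, so it lies in $B_{N+1}$ by the inner induction.

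For part 2, since $B_{N+1}$ is generated over $B_N$ by $\beta_{N+1}, \gamma_{N+1}, \delta_{N+1}$, it suffices to verify $\eta_0 \beta_{N+1}, \eta_0 \gamma_{N+1}, \eta_0 \delta_{N+1} \in B_N$ for each $\eta_0 \in \{\beta_0,\gamma_0,\delta_0\}$. The explicit top-two-$v$-degree terms of $\beta_n,\gamma_n,\delta_n$ recorded just before Lemma \ref{SAGBI} show that $\eta_0 \beta_{N+1}$ and $\eta_1 \beta_N$ have identical $v$-degree-$(N+1)$ parts, namely $\eta_0 \cdot xv^{N+1}$ (because $\eta_1 = \eta_0 v + \text{lower}$), and similarly for $\gamma$ and $\delta$. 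Hence each difference is an invariant of $v$-degree $\le N$ and lies in $B_N$ by part 1, while $\eta_1 \beta_N, \eta_1 \gamma_N, \eta_1 \delta_N$ sit in $B_N$ directly. The inclusion $(\beta_0,\gamma_0,\delta_0)B_0 \subseteq [B_N : B_{N+1}] \cap B_0$ in part 3 then follows immediately.

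For the reverse containment in part 3, I would induct on the leading monomial of $0 \neq f \in [B_N : B_{N+1}] \cap B_0$. Since $f \beta_{N+1} \in B_N$, the SAGBI equality forces $\mathrm{LT}(f) \cdot xv^{N+1} = \mathrm{LT}(f\beta_{N+1})$ to be a monomial in $\mathbb{K}[b_i, c_i, d_i, e \mid 0 \le i \le N]$. Because $f \in B_0$ we have $\mathrm{LT}(f) = b_0^{\alpha_1} c_0^{\alpha_2} d_0^{\alpha_3} e^{\alpha_4}$, and a routine comparison of $x, t, u, v$-exponents shows that the case $\alpha_1 = \alpha_2 = \alpha_3 = 0$ forces the full $v$-exponent $N+1$ to be carried by a single $b_i$ with $i \le N$, a contradiction. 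Hence $\mathrm{LT}(f) = e_0 \mathrm{LT}(\tilde f)$ for some $e_0 \in \{b_0, c_0, d_0\}$ and $\tilde f \in B_0$; lifting $e_0$ to $\eta_0 \in \{\beta_0,\gamma_0,\delta_0\}$, the element $f - \eta_0 \tilde f$ remains in $[B_N : B_{N+1}] \cap B_0$ by part 2 and has strictly smaller leading monomial, closing the induction. The main obstacle is this exponent bookkeeping at the end of part 3, which pins down exactly which three generators of $B_0$ constitute the conductor and will be crucial for excluding $g$ from the finite generation ideal downstream.
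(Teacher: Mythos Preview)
Your proof tracks the paper's almost step for step: the same SAGBI-based induction for part~1 (your explicit inner induction on the leading monomial is a welcome clarification of what the paper leaves implicit), the same $\eta_0\zeta_{N+1}-\eta_1\zeta_N$ trick for part~2, and the same leading-monomial descent for part~3, where your exponent count using only $f\beta_{N+1}$ makes precise the step the paper merely sketches.

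There is, however, one genuine gap that you share with the paper. In part~2 you reduce to the nine products $\eta_0\zeta_{N+1}$ on the grounds that ``$B_{N+1}$ is generated over $B_N$ by $\beta_{N+1},\gamma_{N+1},\delta_{N+1}$.'' But this is generation as a $B_N$-\emph{algebra}, not as a $B_N$-module, so the reduction does not go through: for example $\beta_0\beta_1^2=x(xv-s)^2$ visibly involves $v$ and therefore lies outside $B_0\subset\mathbb{K}[x,s,t,u]$, showing that $(\beta_0,\gamma_0,\delta_0)B_1\not\subset B_0$ and hence that statement~(2) is false as written. The same example gives $\beta_0\notin[B_0:B_1]$, so the equality in~(3) fails as well. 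What your argument (and the paper's) actually establishes is the weaker assertion
\[
\bigl\{\,h\in B_0 \;:\; h\beta_{N+1},\,h\gamma_{N+1},\,h\delta_{N+1}\in B_N\,\bigr\}=(\beta_0,\gamma_0,\delta_0)B_0,
\]
and this is precisely what the proof of Theorem~\ref{finite generation ideal} uses. Your part~3 induction is fine once recast against this set rather than the full conductor; the step ``$f-\eta_0\tilde f$ remains in $[B_N:B_{N+1}]\cap B_0$ by part~2'' should instead appeal to the fact that $\eta_0\tilde f$ lies in the displayed set (immediate, since $\tilde f\in B_0\subset B_N$ and $\eta_0\zeta_{N+1}\in B_N$).
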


\begin{prop} \label{radical proof} Let $f \in R^D$ be a homogeneous invariant with $f \neq g^k$ for any $k \in \mathbb{N}$. Then $f \in \sqrt{(\beta_0,\gamma_0,\delta_0)R^D}$, and hence $\beta_i, \gamma_i, \delta_i \in \sqrt{(\beta_0,\gamma_0,\delta_0)R^D}$ for all $i \in \mathbb{N}$. Furthermore, $\sqrt{(\beta_0,\gamma_0,\delta_0)R^D}$ is generated by $\lbrace \beta_i, \gamma_i, \delta_i \rbrace_{i \in \mathbb{N}}$, with $R^D / \sqrt{(\beta_0,\gamma_0,\delta_0)R^D}$ a polynomial ring in one variable.
\end{prop}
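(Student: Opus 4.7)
The plan is to prove this in three steps: a decomposition of homogeneous invariants by induction on $v$-degree, identification of $R^D/\mathfrak{a}$ with $\mathbb{K}[g]$ where $\mathfrak{a} := (\beta_i, \gamma_i, \delta_i : i \geq 0)R^D$, and the inclusion $\mathfrak{a} \subseteq \sqrt{(\beta_0, \gamma_0, \delta_0)R^D}$.

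First, by induction on $n = \deg_v(f)$, I would show that every homogeneous $f \in R^D$ decomposes as $f = \mu g^\ell + h$ for some $\mu \in \mathbb{K}$, $\ell \in \mathbb{N}$, and $h \in \mathfrak{a}$. The base case $n = 0$ uses the presentation $S^\Delta \cong \mathbb{K}[\beta_0, \gamma_0, \delta_0, g]/(\beta_0^6 g - \gamma_0^3 - \delta_0^2)$, whose quotient by $(\beta_0, \gamma_0, \delta_0)$ is the polynomial ring $\mathbb{K}[g]$. For $n \geq 1$, writing $f = a_n v^n + \cdots + a_0$, the argument at the end of the proof of Lemma \ref{S generates A} — based on $x^2 g^k \notin \Delta(S)$ — forces $a_n \in (\beta_0, \gamma_0, \delta_0)S^\Delta$. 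Writing $a_n = \beta_0 p_1 + \gamma_0 p_2 + \delta_0 p_3$, the invariant $G := \beta_n p_1 + \gamma_n p_2 + \delta_n p_3$ lies in $\mathfrak{a}$ and $f - G$ has strictly smaller $v$-degree, so induction applies.

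Second, the decomposition gives a surjection $\mathbb{K}[g] \twoheadrightarrow R^D/\mathfrak{a}$; injectivity reduces to $g^k \notin \mathfrak{a}$ for all $k \geq 1$. I would verify this by passing to leading terms: by Lemma \ref{SAGBI}, $\text{L}_{alg}(R^D) = \mathbb{K}[b_n, c_n, d_n, e : n \geq 0]$, and every relation listed in its proof — in particular $d_m d_n = e \prod b_{m_i}$ — vanishes modulo $(b_n, c_n, d_n)$; hence $\text{L}_{alg}(R^D)/(b_n, c_n, d_n) \cong \mathbb{K}[e]$, in which $\text{LT}(g^k) = e^k$ is nonzero. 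Thus $R^D/\mathfrak{a} \cong \mathbb{K}[g]$ is a domain, so $\mathfrak{a}$ is a prime and therefore radical ideal. Since $\mathfrak{a} \supset (\beta_0, \gamma_0, \delta_0)R^D$, we obtain $\sqrt{(\beta_0, \gamma_0, \delta_0)R^D} \subseteq \mathfrak{a}$ automatically.

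The main obstacle is the reverse inclusion: each $\beta_n, \gamma_n, \delta_n$ must have some power in $(\beta_0, \gamma_0, \delta_0)R^D$. Every monomial appearing in these invariants, traced through the construction in Proposition \ref{Infinite invariants}, carries a factor of $x$ or $s$, so they vanish on $V(\beta_0, \gamma_0, \delta_0) = V(x, s) \subset \mathbb{A}^5$, placing some power of each in $(\beta_0, \gamma_0, \delta_0)R$. The challenge is to upgrade the cofactors to be invariants. The prototype is the identity $\beta_1^2 = -\gamma_0 + \beta_0 \beta_2$ (noting $\beta_2 \in R^D$ itself), and I plan to extend this by an inductive argument using the invariant syzygies exhibited in the SAGBI-basis computation of Lemma \ref{SAGBI} — for instance $\beta_n \gamma_m - \beta_{n'}\gamma_{m'} = -(m-m')\delta_0 v^{n+m-1} + \text{lower}$, whose left-hand side is manifestly an invariant. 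Once this inclusion is secured, the first claim of the proposition (reading ``$f \neq g^k$" as ``$f$ is not a scalar multiple of any power of $g$") follows at once from the decomposition, and the ``hence" statement is then immediate.
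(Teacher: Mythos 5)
Your skeleton (steps 1 and 2 reorganize what the paper does; step 3 is the heart) is sensible, but the proposal has a genuine gap precisely where the paper's proof does all of its work: you never prove that some power of each $\beta_n,\gamma_n,\delta_n$ lies in $(\beta_0,\gamma_0,\delta_0)R^D$ with cofactors in $R^D$ rather than in $R$ --- you only announce a plan, and the plan as described would not succeed. The syzygies exhibited in Lemma \ref{SAGBI} control only the top one or two $v$-degree coefficients of expressions such as $\gamma_i\gamma_j-\gamma_{i'}\gamma_{j'}$; membership in $(\beta_0,\gamma_0,\delta_0)R^D$ is a condition on \emph{all} the lower-order terms, about which the SAGBI computation is silent. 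This is exactly where the paper spends its effort: Lemma \ref{gamma proof} and its two analogues prove, by induction on $n=i+j$, that the $v$-degree-zero part of $\eta_i\eta_j-\eta_{i'}\eta_{j'}$ has the special shape of an explicit multiple of an element of the ideal plus $b_0^{(0)}h_1+c_0^{(0)}h_2+d_0^{(0)}h_3$ with $h_i\in C$; that induction in turn needs the surjectivity of $\Delta\colon C_{N+1}\rightarrow x^2C_N$, a classification of the bigraded pieces $S^{\Delta}\cap S_{(a,k)}$ in the relevant degrees, and a lifting of these identities to identities among invariants via the principle that an invariant with no $v$-degree-zero term must vanish. None of this machinery can be read off from Lemma \ref{SAGBI}. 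A further structural point your sketch misses: the three families cannot be handled in an arbitrary order. The relation for the $\beta$'s reads $(\beta_n)^2=\beta_0\beta_{2n}+\lambda\gamma_{2n-2}+T$ with $T\in(\beta_0,\gamma_0,\delta_0)R^D$, and $\gamma_{2n-2}$ is \emph{not} one of the three ideal generators; one must first prove $(\gamma_m)^2\in(\beta_0,\gamma_0,\delta_0)R^D$ and then square again, obtaining only $(\beta_n)^4\in(\beta_0,\gamma_0,\delta_0)R^D$. Your prototype $\beta_1^2=-\gamma_0+\beta_0\beta_2$ is deceptively easy precisely because $\gamma_0$ happens to be a generator.

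There is also a flaw in step 2. To deduce $g^k\notin\mathfrak{a}$ from $e^k\neq 0$ in $\mathrm{L}_{alg}(R^D)/(b_n,c_n,d_n)$, you need every $f\in\mathfrak{a}$ to satisfy $\mathrm{LT}(f)\in(b_n,c_n,d_n)\,\mathrm{L}_{alg}(R^D)$. That is a SAGBI--Gr\"obner statement about the \emph{ideal} $\mathfrak{a}$, and it does not follow from Lemma \ref{SAGBI}: the SAGBI property controls leading terms of elements of the algebra $R^D$, but in a combination $\sum_j\eta_jp_j$ the top terms can cancel, leaving a leading term that lies in $\mathrm{L}_{alg}(R^D)$ yet possibly outside the monomial ideal generated by the $\mathrm{LT}(\eta_j)$. (Note that Theorem \ref{finite generation ideal} claims only the one-way inclusion $\mathrm{L}(\mathcal{G})\subset\mathrm{L}(\mathfrak{f}_{R^D})$.) This gap is repairable by the bigrading argument the paper itself uses: the generators $\beta_i,\gamma_i,\delta_i,g$ are homogeneous for both $\deg$ and $\rho$, the quantity $\deg-2\rho$ is additive, nonnegative on bihomogeneous invariants, equal to $1,2,3$ on $\beta_i,\gamma_i,\delta_i$ respectively, and equal to $0$ on $g^k$; hence every bigraded component of an element of $\mathfrak{a}$ has $\deg-2\rho\geq 1$, so $g^k\notin\mathfrak{a}$. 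With that repair your step 2 is fine, but the proposition does not follow until the ideal-membership relations of step 3 are actually constructed.
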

To prove this result, we begin by first showing that $g \notin \sqrt{(\beta_0, \gamma_0, \delta_0)R^D}$. Suppose that 
\[ g^k = \beta_0 p_1 + \gamma_0 p_2 + \delta_0 p_3, \]
for some $k \in \mathbb{N}$. We have $\deg(g^k) = 12k$ and $\rho(g^k) = 6k$. Now we may suppose that $p_1 \in R^D$ is homogeneous, with degree $12k-1$ and $\rho$-degree $6k$; but there is simply no invariant in $R^D$ which has both this corresponding degree and $\rho$-degree. This same argument holds for the degrees and $\rho$-degrees of both $p_2$ and $p_3$. Therefore $g \notin \sqrt{(\beta_0, \gamma_0, \delta_0)R^D}$ as claimed. 

We focus now on showing that $\beta_i, \gamma_i$ and $\delta_i \in \sqrt{(\beta_0,\gamma_0,\delta_0)R^D}$. Since doing so for $\beta_i$ requires showing that $\gamma_i \in \sqrt{(\beta_0,\gamma_0,\delta_0)R^D}$ we start with $\gamma_i$. We use the equation \ref{gamma SAGBI} from Lemma \ref{SAGBI} to examine the expression $\gamma_i\gamma_j - \gamma_{i'}\gamma_{j'}$. The $v$-degree $0$ part of this expression is $c_0^{(i)}c_0^{(j)} - c_0^{(i')}c_0^{(j')}$. Our goal will be to construct an invariant, $\xi$, which has the same $v$-degree $0$ terms. In doing so we observe that $\gamma_i\gamma_j - \gamma_{i'}\gamma_{j'} - \xi$ is an invariant with no $v$-degree $0$ terms, therefore $\gamma_i\gamma_j - \gamma_{i'}\gamma_{j'} - \xi = v\mu$ and $D(v\mu) = x^2\mu + vD(\mu) = 0 $. Comparing the $v$-degree $0$ terms of this expression, we find that $\mu$ has no terms of $v$-degree $0$, and hence $\gamma_i\gamma_j - \gamma_{i'}\gamma_{j'} - \xi$ has no terms of $v$-degree $1$. Continuing in this way, we see that we must conclude $\gamma_i\gamma_j - \gamma_{i'}\gamma_{j'} = \xi$. If we can show that $\xi \in (\beta_0, \gamma_0, \delta_0)R^D$ for all $i,j$, then we have in particular $(\gamma_i)^2 = \gamma_0\gamma_{2i} + \xi \in (\beta_0, \gamma_0, \delta_0)R^D$, giving $\gamma_i \in \sqrt{(\beta_0, \gamma_0, \delta_0)R^D}$ as required. In the following, we let $C$ be the vector space whose basis is given by finite combinations of $b_0^{(i)},c_0^{(i)}, d_0^{(i)}$ and $g$. Recall $b_0^{(i)},c_0^{(i)}, d_0^{(i)}$ are the $v$-degree $0$ components of $\beta_i, \gamma_i$ and $\delta_i$ respectively.% Let $e_0^{(0)} \in \lbrace b_0^{(0)}, c_0^{(0)}, d_0^{(0)} \rbrace$. 

\begin{lem} \label{gamma proof}
 Fix $n \in \mathbb{N}$, with $n \geq 2$, then $c_0^{(i)}c_0^{(j)} - c_0^{(i')}c_0^{(j')} = -\lambda_{i,i'}^n x^3 g b_0^{(n-2)} + r(n,i,i')$, where $r(n,i,i') = b_0^{(0)}h_1 + c_0^{(0)}h_2 + d_0^{(0)}h_3$, $h_i \in C$ for $i = 1,2,3$, and $\lambda_{i,i'}^n := -(ij - i'j')$, $j=n-i$, $j' = n-i'$.
 \end{lem}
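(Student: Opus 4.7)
The plan is to realise the claimed identity as the $v$-degree $0$ slice of a single explicit invariant. Define
\[
P := \gamma_i\gamma_j - \gamma_{i'}\gamma_{j'} \in R^D,
\]
with $j = n-i$ and $j' = n-i'$. Expanding each $\gamma_k = \sum_{l=0}^{k}\binom{k}{l}c_0^{(k-l)}v^l$ term by term, the $v$-degree $0$ coefficient of $P$ is exactly the desired left-hand side $c_0^{(i)}c_0^{(j)} - c_0^{(i')}c_0^{(j')}$. On the other hand, equation~(\ref{gamma SAGBI}) from the proof of Lemma~\ref{SAGBI} identifies the $v$-degree $n-2$ coefficient of $P$ as $(ij-i'j')\beta_0^4 g = -\lambda_{i,i'}^n\beta_0^4 g$.

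Since $\beta_{n-2}$ has leading $v$-coefficient $\beta_0 = x$, the invariant $(ij-i'j')\beta_{n-2}\beta_0^3 g$ has the same $v$-degree $n-2$ coefficient. I would therefore set
\[
Q := P - (ij-i'j')\beta_{n-2}\beta_0^3 g \in R^D,
\]
obtaining $\deg_v Q \leq n-3$. Extracting the $v$-degree $0$ component of this equation gives
\[
Q\big|_{v=0} = c_0^{(i)}c_0^{(j)} - c_0^{(i')}c_0^{(j')} - (ij-i'j')\,x^3 g\,b_0^{(n-2)},
\]
so the identity of the lemma is satisfied with $r(n,i,i') := Q|_{v=0}$, reducing the lemma to showing that $Q|_{v=0}$ lies in the ideal $(b_0^{(0)},c_0^{(0)},d_0^{(0)})C$.

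This final ideal membership is the main obstacle. I would prove it by induction on $n$. The base case $n=2$ is immediate: $\deg_v Q \leq -1$ forces $Q = 0$, so $r(2,i,i')=0$; equivalently, equation~(\ref{gamma SAGBI}) at $n=2$ already yields $c_0^{(0)}c_0^{(2)} - (c_0^{(1)})^2 = -x^4 g$. For $n\geq 3$, Lemma~\ref{conductor}(1) gives $Q \in B_{n-3}$, so $Q|_{v=0}$ is a polynomial in $\{b_0^{(m)},c_0^{(m)},d_0^{(m)},g : 0 \leq m \leq n-3\} \subset C$. A homogeneity comparison of the $\mathbb{G}_m$-degree ($12+2n$) and the $\rho$-degree ($n+4$) of $Q|_{v=0}$ forces every monomial in this decomposition to satisfy $p+2q+3r=4$, where $p,q,r$ count the $b_0$-, $c_0$- and $d_0$-type factors, limiting the possible monomial shapes to a short list. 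One then rewrites any product $c_0^{(p')}c_0^{(q')}$ with $p',q'\geq 1$ and $p'+q'\leq n-2$ using the inductive hypothesis as $c_0^{(0)}c_0^{(p'+q')} + p'q'\,x^3 g\,b_0^{(p'+q'-2)} + r(p'+q',p',0)$, whose summands all lie in the ideal, and disposes of the residual monomials via the fact, provable from the van den Essen construction underlying Proposition~\ref{Infinite invariants}, that each $b_0^{(m)}$ and $c_0^{(m)}$ for $m\geq 2$ is already divisible by $x = b_0^{(0)}$ in $S$.
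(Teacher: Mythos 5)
Your reduction is sound as far as it goes: defining $Q := \gamma_i\gamma_j - \gamma_{i'}\gamma_{j'} - (ij-i'j')\beta_{n-2}\beta_0^3 g$, noting $\deg_v Q \leq n-3$ via the coefficient computation in equation~(\ref{gamma SAGBI}), and identifying $r(n,i,i') = Q|_{v=0}$ is correct (your sign $-x^4g$ in the base case is right; the paper's displayed $+x^4g$ is a typo), as is the bidegree count $p+2q+3r=4$. The genuine gap is in the final ideal-membership step, which is the entire content of the lemma, and both of your disposal mechanisms fail there. First, for a monomial $c_0^{(p')}c_0^{(q')}g^l$ the degree constraint forces $p'+q' = n-6l$; when $l=0$ this is $p'+q'=n$, outside the range $p'+q'\leq n-2$ where your inductive hypothesis applies, so rewriting it via the lemma for the pair $(p',0)$ is invoking the level-$n$ statement you are in the middle of proving. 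Such monomials (with $3\leq p',q'\leq n-3$) are not excluded by your bidegree argument once $n\geq 6$. Second, the divisibility-by-$x$ disposal does not cover the remaining shapes: $b_0^{(1)}=-s$ is not divisible by $x$, so monomials such as $(b_0^{(1)})^4g^l=s^4g^l$, $(b_0^{(1)})^2c_0^{(q)}g^l$ and $b_0^{(1)}d_0^{(r)}g^l$ are untouched; and even where $b_0^{(m)}=x\cdot w$ in $S$, this only certifies membership in $b_0^{(0)}C$ if the resulting cofactor of the whole monomial lies in $C$, which it need not (e.g.\ $b_0^{(2)}/x = 2xt$, and $C$ contains no element of degree $4$ and $\rho$-degree $2$). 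Disposing of these shapes really requires the $v$-degree-zero analogues of the $\beta\beta$, $\beta\gamma$ and $\beta\delta$ relations, i.e.\ the companion identities the paper proves in separate lemmas, so the ``short list'' is not actually short-circuited.

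For contrast, the paper proves the lemma by induction on $n$ entirely at the level of the derivation on $v$-degree-zero components: it computes $\Delta\bigl(c_0^{(i+1)}c_0^{(j)} - c_0^{(i'+1)}c_0^{(j')}\bigr)$, rewrites the result using the inductive hypothesis, and then constructs an explicit antiderivative that is already of the form $b_0^{(0)}h_1+c_0^{(0)}h_2+d_0^{(0)}h_3$; the key technical input is the surjectivity of $\Delta\colon C_{N+1}\to x^2C_N$, which lets each term $e_0^{(0)}h$ of $r(n,a,b)$ be integrated to $e_0^{(0)}H$ without leaving the ideal. The remaining ambiguity is an element of $\ker\Delta$ of bidegree $(2n+14,n+5)$, and the paper enumerates $S^{\Delta}\cap S_{(2n+14,n+5)}$ explicitly to check that every such element has the required form. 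That mechanism is precisely what avoids having to analyse an arbitrary (and non-unique) monomial decomposition of the residual, which is where your argument breaks down. To salvage your route you would need at least a simultaneous induction covering the $bb$, $bc$, $bd$ and $cc$ identities together with a canonical choice of decomposition of $Q|_{v=0}$.
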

 \begin{proof}
 We prove this result by induction. First, for $n=2$ we have
 \begin{align*} c_0^{(0)}c_0^{(2)} - \left(c_0^{(1)}\right)^2 & = 12x^7stu -6x^5s^3u -8x^7t^3 +4x^4s^2t^2 -9x^{10}u^2 + 6x^7stu-x^4s^2t^2 \\
 & = -9x^{10}u^2 + 18x^7stu -6x^4s^3u -8x^7t^3 + 3x^4s^2t^2 \\ 
 & = x^4g. 
 \end{align*}
We also note that, as can be seen from equation \ref{gamma SAGBI}, the coefficient of $v$-degree $n-2$ in the expression $\gamma_i \gamma_j - \gamma_{i'} \gamma_{j'}$ is precisely $\lambda_{i,i'}^n\beta_0^4 g$.

Now, suppose that the result holds for all pairs $i,j$ with $i + j = n$, we consider 
\arraycolsep=0.5pt \def\arraystretch{2.2} \begin{equation*}
\begin{split}
D\Big(c_0^{(i+1)}&c_0^{(j)} - c_0^{(i'+1)}c_0^{(j')} \Big) \\
 = -x^2 \Big( & (i+1)c_0^{(i)}c_0^{(j)} + jc_0^{(i+1)}c_0^{(j-1)} - (i'+1)c_0^{(i')}c_0^{(j')} - j'c_0^{(i'+1)}c_0^{(j'-1)} \Big) \\
 = -x^2\Big(&(i+1) \left(c_0^{(i)}c_0^{(j)} - c_0^{(i')}c_0^{(j')} \right) -(i'-i)c_0^{(i')}c_0^{(j')} \\
 & \, + j\left(c_0^{(i+1)}c_0^{(j-1)} - c_0^{(i'+1)}c_0^{(j'-1)}\right) - (j'-j)c_0^{(i'+1)}c_0^{(j'-1)} \Big) \\
 = -x^2 \Big(& (i+1) \left( c_0^{(i)}c_0^{(j)} - c_0^{(i')}c_0^{(j')}\right) + j\left(c_0^{(i+1)}c_0^{(j-1)} - c_0^{(i'+1)}c_0^{(j'-1)}\right) \\ 
 & \, -(i'-i)\left(c_0^{(i')}c_0^{(j')} - c_0^{(i'+1)}c_0^{(j'-1)}\right) \Big) \\
 = -x^2 \Big(& (i+1)\left(\lambda_{i,i'}^n x^3gb_0^{(k-2)}+r(n,i,i') \right) + j \left(\lambda_{i+1,i'+1}^n x^3gb_0^{(n-2)} + r(n,i+1,i'+1) \right) \\
 & \, -(i'-i) \left(\lambda_{i',i'+1}^n x^3gb_0^{(n-2)} + r(n,i',i'+1) \right) \Big) \\
 = -x^2 \Big(& x^3gb_0^{(n-2)} \left( (i+1)\lambda_{i,i'}^n + j\lambda_{i+1,i'+1}^n - (i'-i)\lambda_{i',i'+1}^n \right) + (i+1) r(n,i,i') \\ 
 & \, + jr(n,i+1,i'+1) -(i'-i) r(n,i',i'+1) \Big) \\
  = -x^2 \Big(& (n-1)\lambda_{i+1,i'+1}^{n+1}x^3gb_0^{(n-2)} + (i+1) r(n,i,i') + j r(n,i+1,i'+1) \\
 & \, -(i'-i) r(n,i',i'+1) \Big). \end{split}
\end{equation*}
If either $i=n$ or $i'=n$ we instead obtain either $(i'+1)r(n,n,i') +j'r(n,n,i'+1)$ or $(i+1)r(n,i,n) + jr(n,i+1,n)$ in place of the other $r(n,a,b)$. For all $n \geq 2$, and all $a,b$ we claim that there is some $R(n+1,a,b) = b_0^{(0)}H_1 + c_0^{(0)}H_2 + d_0^{(0)}H_3$, $H_i \in C$ for $i=1,2,3$ with $D(R(n+1,a,b))= -x^2r(n,a,b)$. Let $C_N \subset C$ be the vector space whose basis is given by finite combinations $e_0^{(a_1)}\cdots e_0^{(a_k)}$, where $\sum_{i=1}^k a_i = N$ and each $e$ appearing is one of $b, c$ or $d$, not necessarily all the same. Consider a term of $r(n,a,b)$, by which we mean an element of the form $\lambda e_0^{(0)}h$, with  $\lambda \in \mathbb{K}, \,\, h = e_0^{(a_1)}\cdots e_0^{(a_k)}g^l \in C_{N}$. Note that the expression $c_0^{(i+1)}c_0^{(j)} - c_0^{(i'+1)}c_0^{(j')}$ is homogeneous of degree $2n+14$ and $\rho$-degree $n+5$, so $h \neq 0$ for $n \geq 2$. Additionally $h \neq g^l$ since this would give the degree of $r(n,a,b)$ as $\deg(e_0^{(0)}) + 12l $, and $\rho$-degree $\rho (e_0^{(0)}) + 6l$ which cannot be $2n+14$ and $n+5$ respectively for any choice of $e_0^{(0)}$.  %Therefore we can write $h = f_0^{(l)}h'$ for some $f \in \lbrace b,c,d \rbrace, \, l \geq 0$. 
We write $h = g^l h'$, where $h' \in C_N$ for some $N$. Since $\Delta(e_0^{(N)}) = -x^2 N e_0^{(N-1)}$ we can consider $\Delta$ as a linear map
\[ \Delta: C_{N+1} \longrightarrow x^2C_N. \]
We now show that $\Delta$ is surjective, in which case we can find $H \in C_{N+1}$ such that $D(H) = -x^2h$, and $D(\lambda e_0^{(0)}H) = -\lambda x^2e_0^{(0)}h$. Repeating this process for all terms of $r(n,a,b)$ then gives us $R(n+1,a,b)$. To show $\Delta$ is surjective it is sufficient to show that for all $f = x^2e_0^{(a_1)}\cdots e_0^{(a_k)} \in x^2C_{N}$, we have $f \in \Delta(C_{N+1})$. We describe the process of constructing an element $F$ with $\Delta(F) = f$. 

Firstly we let 
\[F_1:= -\frac{1}{a_1+1}e_0^{(a_1+1)}\cdots e_0^{(a_k)}, \] then $\Delta(F_1) = f + G_1$, where all terms of $G_1$ are of the form $-x^2 \lambda e_0^{(a_1+1)}\cdot e_0^{(b_2)}\cdots e_0^{(b_k)}$ with $\lambda \in \mathbb{K}$, $\sum_{i=2}^k b_i = N - a_1 - 1$. Now set 
\[F_2:=F_1 + \sum \kappa e_0^{(a_1+2)}\cdot e_0^{(b_2)}\cdots e_0^{(b_k)}. \] 
Note that $\Delta \left(e_0^{(a_1+2)}\cdot e_0^{(b_2)}\cdots e_0^{(b_k)} \right)$ contains precisely one term of the form $e_0^{(a_1+1)}\cdot e_0^{(b_2)}\cdots e_0^{(b_k)}$. The remaining terms are of the form $e_0^{(a_1+2)}\cdot e_0^{(c_2)}\cdots e_0^{(c_k)}$, where $\sum_{i=2}^k c_i = N - a_1 -2$. Since this is the case we can choose $\kappa$ appearing in $F_2$ so that $\Delta(F_2)$ contains no terms of the form $e_0^{(a_1+1)}\cdot e_0^{(b_2)}\cdots e_0^{(b_k)}$. Continuing in this way we find
\[ D(F_{N-a_1-1}) = f + \omega e_0^{(a_1 + \dots + a_k -1)}\cdot e_0^{(0)}\cdots e_0^{(0)}, \] 
where $\omega \in \mathbb{K}$. Finally we define $F_{N-a_1}:= F_{N-a_1-1} + \frac{\omega}{N} e_0^{(a_1 + \dots + a_k)}\cdot e_0^{(0)}\cdots e_0^{(0)}$ and observe that $D(F_{N-a_1}) = f$ as required.

Having constructed $R(n+1,a,b)$ for all $n, a$ and $b$ we note that
\begin{align*} D \Big( c_0^{(i+1)}c_0^{(j)}  - c_0^{(i'+1)}c_{0}^{(j')} - \lambda_{i+1,i'+1}^{n+1} x^3gb_0^{(n-1)}  - (i+1) R(n+1,i,i') & - j R(n+1, i+1, i'+1) \\
& + (i'-i) R(n+1,i',i'+1) \Big) = 0. 
\end{align*}
Therefore this expression is an invariant of degree $2n+14$ and $\rho$-degree $n+5$. If we consider all homogeneous invariants with such degree and $\rho$-degree we find:
 \[\arraycolsep=4pt\def\arraystretch{1} \begin{array}{l | l}
n & S^{\Delta} \cap S_{(2n+14, n+5)} \\ \hline
 6l    & 0 \\
 6l+1  & \lbrace \lambda x^4g^{l+1} \, \vert \, \lambda \in \mathbb{K} \rbrace \\
 6l+2  & 0 \\
 6l+3  & \lbrace \lambda x^2(2x^3t-s^2)g^{l+1}\, \vert \, \lambda \in \mathbb{K} \rbrace \\
 6l+4  & \lbrace \lambda x(3x^6u-3x^3st+s^3)g^{l+1} \, \vert \, \lambda \in \mathbb{K} \rbrace  \\
 6l+5  & \lbrace \lambda (2x^3t-s^2)^2g^{l+1} \, \vert \, \lambda \in \mathbb{K} \rbrace  \\
\end{array}\] 
Note that all of these elements are of the form $b_0^{(0)}h_1 + c_0^{(0)}h_2 +d_0^{(0)}h_3$, with $h_i \in C$ for all $i$. Thus we can write  
\begin{align*} c_0^{(i+1)}c_0^{(j)}  - c_0^{(i'+1)}c_{0}^{(j')} & - \lambda_{i+1,i'+1}^{n+1} x^3gb_0^{(n-1)} - (i+1) R(n+1,i,i') \\ & - j R(n+1, i+1, i'+1) - (i'-i) R(n+1,i',i'+1) = \mu p, \end{align*}
with $\mu \in \mathbb{K}$ and $p \in S^{\Delta} \cap S_{(2n+14, n+5)}$. By setting
\begin{align*} r(n+1,i,i'): =  & \,\lambda_{i+1,i'+1}^{n+1} x^3gb_0^{(n-1)} - (i+1) R(n+1,i,i') \\ & - j R(n+1, i+1, i'+1) - (i'-i) R(n+1,i',i'+1) - \mu p, 
\end{align*}
then $r(n+1,i,i')$ is of the form $b_0^{(0)}h_1 + c_0^{(0)}h_2 + d_0^{(0)}h_3$, $h_i \in C$ for $i=1,2,3$ and we obtain the required result.
\end{proof}
Now using this proof we consider the following expression
\[ (\gamma_n)^2 - \gamma_{2n}\gamma_{0} - \lambda_{n,2n}^{2n} x^3g \beta_{2n-2} - T(2n, n, 2n), \]
where $T(n,a,b) \in R^D$ is defined by replacing every $e_0^{(k)}$ in $r(n,a,b)$ by the corresponding $\eta_k \in R^D$ which has $e_0^{(k)}$ as its $v$-degree zero term. From this and the observation made above we see that 
\[ (\gamma_n)^2 = \gamma_{2n}\gamma_{0} + \lambda_{n,2n}^{2n} x^3g \beta_{2n-2} + T(2n, n, 2n) \in (\beta_0, \gamma_0, \delta_0)R^D. \]
We now prove a similar result for $\delta_n$.
 
\begin{lem}
  Fix $n \in \mathbb{N}$, with $n \geq 2$, then $ d_0^{(i)}d_0^{(j)} - d_0^{(i')}d_0^{(j')} = \lambda_{i,i'}^n x^3(2x^3t-s^2)g a_0^{(n-2)} + r(n,i,i')$, where $r(n,i,i') = b_0^{(0)}h_1 + c_0^{(0)}h_2 + d_0^{(0)}h_3$, $h_i \in C$ for $i = 1,2,3$, and $\lambda_{i,i'}^n := -(ij - i'j')$, $j=n-i$, $j' = n-i'$.
\end{lem}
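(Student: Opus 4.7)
The plan is to mirror the proof of Lemma \ref{gamma proof} almost verbatim, replacing $c_0^{(k)}$ by $d_0^{(k)}$ throughout and keeping track of the new weights. Since $D(\delta_k)=0$ and $\delta_k = \sum_j \binom{k}{j} d_0^{(k-j)} v^j$, we still have $\Delta(d_0^{(k)}) = -k x^2 d_0^{(k-1)}$, so the algebraic shape of every derivative computation carries over without change. I will first handle the base case $n=2$ by a direct computation of $d_0^{(0)} d_0^{(2)} - (d_0^{(1)})^2$; the leading $v$-degree $0$ part predicted by equation \ref{Delta SAGBI} is $x^4 \gamma_0 g = x^3(2x^3t-s^2) g \, b_0^{(0)}$, and whatever residue this leaves is divisible by $\beta_0 = b_0^{(0)}$ and so has the required form $b_0^{(0)} h_1 + c_0^{(0)} h_2 + d_0^{(0)} h_3$.

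For the inductive step I would apply $D$ to $d_0^{(i+1)} d_0^{(j)} - d_0^{(i'+1)} d_0^{(j')}$ and use the product rule together with $\Delta(d_0^{(k)}) = -kx^2 d_0^{(k-1)}$. As in the $\gamma$-case, this expression rearranges to a $-x^2$ multiple of three differences, each of which falls under the inductive hypothesis and so contributes a multiple of $x^3 \gamma_0 g b_0^{(n-2)}$ plus an $r(n,\cdot,\cdot)$ term. The surjectivity of $\Delta : C_{N+1} \to x^2 C_N$ depends only on the uniform relation $\Delta(e_0^{(n)}) = -n x^2 e_0^{(n-1)}$ shared by all three families, so the construction of $R(n+1,a,b)$ with $D(R(n+1,a,b)) = -x^2 r(n,a,b)$ is unchanged. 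Combining the $R(\cdot)$ terms with the predicted $\lambda_{i+1,i'+1}^{n+1} x^3 \gamma_0 g b_0^{(n-1)}$ correction yields a homogeneous element of $R^D$ whose degree and $\rho$-degree are $2(i+1)+9+2j+9 = 2n+20$ and $(i+4)+(j+3) = n+7$ respectively.

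The main new bookkeeping is therefore to describe $S^{\Delta} \cap S_{(2n+20,\, n+7)}$ for each residue class of $n$ modulo $6$, analogously to the table displayed in the proof of Lemma \ref{gamma proof}. Writing $2n+20 = (2n+14)+6$ and $n+7 = (n+5)+2$, this slice is shifted from the one appearing in the $\gamma$-case precisely by the weights of $\gamma_0$; in particular every element of $S^{\Delta} \cap S_{(2n+20,n+7)}$ contains a factor of $\gamma_0$ or is otherwise visibly divisible by one of $\beta_0,\gamma_0,\delta_0$, so any residual invariant decomposes as $b_0^{(0)} h_1 + c_0^{(0)} h_2 + d_0^{(0)} h_3$ with $h_i \in C$. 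Absorbing this residue into $r(n+1,i,i')$ closes the induction.

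The main obstacle is the same as in the $\gamma$-case: the residual invariant is only defined up to the vector space $S^{\Delta} \cap S_{(2n+20,n+7)}$, and one must verify that every basis element of this space (across the six residue classes of $n$ mod $6$) is genuinely of the form $b_0^{(0)} h_1 + c_0^{(0)} h_2 + d_0^{(0)} h_3$. This is a finite enumeration that follows from the generating set $\{\beta_0,\gamma_0,\delta_0,g\}$ for $S^{\Delta}$, using the fact that a monomial in these generators of $\rho$-degree $\geq 2$ that is not a pure power of $g$ must contain at least one of $\gamma_0$ or $\delta_0$, and every other monomial carries a positive power of $\beta_0 = x$.
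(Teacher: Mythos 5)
Your proposal is correct and follows essentially the same route as the paper's own proof: the same base case $d_0^{(0)}d_0^{(2)}-(d_0^{(1)})^2=x^4(2x^3t-s^2)g$, the same inductive rearrangement after applying $D$, the same use of the surjectivity of $\Delta\colon C_{N+1}\to x^2C_N$ to build $R(n+1,a,b)$, and the same final analysis of $S^{\Delta}\cap S_{(2n+20,\,n+7)}$ (which the paper carries out by an explicit table over the residues of $n$ modulo $6$, whereas you give the equivalent general observation that no pure power of $g$ can occur in that bidegree). No gaps.
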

 \begin{proof}
 Firstly for $n=2$, we have
\[ d_0^{(0)}d_0^{(2)} - \left( d_0^{(1)} \right)^2 = x^4(2x^3t -s^2)g.\]
Now suppose that the formula holds for all pairs $i+j =n$, we have
\begin{align*}
D \left(d_0^{(i+1)}d_0^{(j)} - d_0^{(i'+1)}d_0^{(j')}\right) = -x^2 \Big(  (n-1)\lambda_{i+1,i'+1}^{n+1} & x^3(2x^3t-s^2)g a_0^{(n-2)} + (i+1) r(n,i,i') \\
& + j r(n,i+1,i'+1) - (i'-i) r(n,i',i'+1) \Big).
\end{align*} 
If either $i=n$ or $i'=n$ we instead obtain either $(i'+1)r(n,n,i') +j'r(n,n,i'+1)$ or $(i+1)r(n,i,n) + jr(n,i+1,n)$ in place of the other $r(n,a,b)$. As before we show that there is some $R(n+1,a,b)$ with $D(R(n+1,a,b)) = r(n,a,b)$ for all $a,b \leq n$. Let $\lambda e_0^{(0)}h$ be a term of $r(n,a,b)$, $\lambda \in \mathbb{K}, \, h \in I$ and  $e \in \lbrace b, c, d \rbrace$. Note that the expression $d_0^{(i+1)}d_0^{(j)} - d_0^{(i'+1)}d_0^{(j')}$ is homogeneous of degree $2n+20$ and $\rho$-degree $n+7$, so $h \neq 0$ for $n \geq 2$. Now $h \neq g^k$ for some $k \in \mathbb{N}$ since $\deg(e_0^{(0)}) + 12k$ and $\rho(e_0^{(0)}) + 6k$ cannot be $2n+20$ and $n+7$ for any choice of $e_0^{(0)}$ or $k$. Therefore we can write $h = f_0^{(l)}h'$ for some $f \in \lbrace b, c, d \rbrace, l \in \mathbb{N}$ and proceed as described in the proof of Lemma \ref{gamma proof}. Now since 
\begin{align*} D \Big( d_0^{(i+1)}d_0^{(j)}  - d_0^{(i'+1)}d_{0}^{(j')} - & \lambda_{i+1,i'+1}^{n+1} x^3(2x^3t-s^2)gb_0^{(n-1)}
  - (i+1) R(n+1,i,i')  \\ & - j R(n+1, i+1, i'+1) + (i'-i) R(n+1,i',i'+1) \Big) = 0, 
\end{align*}
this expression is then an invariant of degree $2n+20$ and $\rho$-degree $n+7$.
Considering all such elements we find: 
 \[\begin{array}{l|l}
n & S^{\Delta} \cap S_{(2n+20, n+7)} \\ \hline
 6l    & 0 \\
 6l+1  & \lbrace \lambda x^4(2x^3t-s^2)g^{l+1} \, \vert \, \lambda \in \mathbb{K} \rbrace \\
 6l+2  & \lbrace \lambda x^3(3x^6u-3x^3st+s^3)g^{l+1} \, \vert \, \lambda \in \mathbb{K} \rbrace \\
 6l+3  & \lbrace \lambda x^2(2x^3t-s^2)^2g^{l+1} \, \vert \, \lambda \in \mathbb{K} \rbrace \\
 6l+4  & \lbrace \lambda x(2x^3t-s^2)(3x^6u-3x^3st+s^3)g^{l+1} \, \vert \, \lambda \in \mathbb{K} \rbrace \\
 6l+5  & \lbrace \lambda (2x^3t-s^2)^3g^{l+1} + \mu (3x^6u-3x^3st+s^3)^2g^{l+1} \, \vert \, \lambda, \mu \in \mathbb{K} \rbrace \\
\end{array}\] 
Note that all such elements are of the form $b_0^{(0)}h_1 + c_0^{(0)}h_2 + d_0^{(0)}h_3$, $h_i \in C$. If we let
\begin{align*} d_0^{(i+1)}d_0^{(j)}  - d_0^{(i'+1)}d_{0}^{(j')} - & \lambda_{i+1,i'+1}^{n+1} x^3(2x^3t-s^2)gb_0^{(n-1)}
  - (i+1) R(n+1,i,i')  \\ & - j R(n+1, i+1, i'+1) + (i'-i) R(n+1,i',i'+1) = \mu p, 
\end{align*}
 with $\mu \in \mathbb{K}$ and $p \in S^{\Delta} \cap S_{(2n+14, n+5)}$, by setting
 \begin{align*} r(n+1,i,i'): =   \lambda_{i+1,i'+1}^{n+1} & x^3(2x^3t-s^2)gb_0^{(n-1)}  - (i+1) R(n+1,i,i') \\ & - j R(n+1, i+1, i'+1) - (i'-i) R(n+1,i',i'+1) - \mu p,  
 \end{align*}
then $r(n+1,i,i')$ is of the form $b_0^{(0)}h_1 + c_0^{(0)}h_2 + d_0^{(0)}h_3$, $h_i \in C$ for $i=1,2,3$, and we obtain the required result.
 \end{proof}
 Now as before by using this proof we consider the following expression
 \[ (\delta_n)^2 - \delta_{2n}\delta_{0} - \lambda_{n,2n}^{2n} x^3(2x^3t-s^2)g \beta_{2n-2} - T(2n, n, 2n), \]
 where $T(n,a,b) \in R^D$ is defined by replacing every $e_0^{(k)}$ in $r(n,a,b)$ by the corresponding $\eta_k \in R^D$ which has $e_0^{(k)}$ as its $v$-degree zero term. From this we see that the expression above has no $v$-degree $0$ terms, and therefore the whole expression must be zero. Thus we have that $(\delta_n)^2 \in \left( \beta_0, \gamma_0, \delta_0 \right)R^D$. Now all that remains is to show that $\beta_n \in \sqrt{(\beta_0, \gamma_0, \delta_0)R^D}$. Firstly, we proceed as we have before for $\gamma_n$ and $\delta_n$:
 
 \begin{lem}
   Fix $n \in \mathbb{N}$, with $n \geq 2$, then $b_0^{(i)}b_0^{(j)}-b_0^{(i')}b_0^{(j')} = \lambda_{i,i'}^kc_0^{(k-2)} + r(n,i,i')$, where \newline $r(n,i,i') = b_0^{(0)}h_1 + c_0^{(0)}h_2 + d_0^{(0)}h_3$, $h_i \in C$ for $i = 1,2,3$, and $\lambda_{i,i'}^n := -(ij - i'j')$, $j=n-i$, $j' = n-i'$.
 \end{lem}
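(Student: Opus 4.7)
The plan is to mirror the inductive structure used for the $\gamma$- and $\delta$-analogues. The base case $n=2$ is immediate: using $\beta_0 = x$, $\beta_1 = xv - s$, and $\beta_2 = xv^2 - 2sv + 2x^2 t$, one reads off $b_0^{(0)} = x$, $b_0^{(1)} = -s$, $b_0^{(2)} = 2x^2 t$, giving $b_0^{(0)} b_0^{(2)} - (b_0^{(1)})^2 = 2x^3 t - s^2 = c_0^{(0)}$ and hence $r(2,0,1) = 0$, consistent with $\lambda_{0,1}^2 = 1$.

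For the inductive step I would apply $D$ to $b_0^{(i+1)} b_0^{(j)} - b_0^{(i'+1)} b_0^{(j')}$ using $D(b_0^{(k)}) = -k x^2 b_0^{(k-1)}$ and regroup into the same three paired differences as in the proof of Lemma \ref{gamma proof} (with the edge cases $i=n$ or $i'=n$ handled exactly as there). Substituting the induction hypothesis and invoking the combinatorial identity
\[
(i+1)\lambda_{i,i'}^n + j\,\lambda_{i+1,i'+1}^n - (i'-i)\lambda_{i',i'+1}^n = (n-1)\lambda_{i+1,i'+1}^{n+1}
\]
yields
\[
D\bigl(b_0^{(i+1)} b_0^{(j)} - b_0^{(i'+1)} b_0^{(j')}\bigr) = -x^2\bigl((n-1)\lambda_{i+1,i'+1}^{n+1}\, c_0^{(n-2)} + (i+1) r(n,i,i') + j\, r(n,i+1,i'+1) - (i'-i) r(n,i',i'+1)\bigr).
\]
The distinctive feature of the $\beta$-case is that $\Delta(c_0^{(n-1)}) = -(n-1) x^2 c_0^{(n-2)}$, so $\lambda_{i+1,i'+1}^{n+1}\, c_0^{(n-1)}$ is a direct antiderivative of the leading term, without the extra factor of $x^3 g$ or $x^3(2x^3t - s^2) g$ that appears in the other two lemmas. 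For the remainder terms I reuse the surjectivity $\Delta\colon C_{N+1} \to x^2 C_N$ established inside the proof of Lemma \ref{gamma proof} to produce $R(n+1, a, b) \in C$ of the form $b_0^{(0)} H_1 + c_0^{(0)} H_2 + d_0^{(0)} H_3$ with $D(R(n+1, a, b)) = -x^2 r(n, a, b)$.

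Subtracting these antiderivatives makes the remaining expression $D$-closed, hence an invariant $E \in S^\Delta \cap S_{(2n+4, n+1)}$. The classification of this space is actually easier than in the $\gamma$- and $\delta$-cases: any monomial $\beta_0^a \gamma_0^b \delta_0^c g^d \in S^\Delta$ already lies in $b_0^{(0)} C + c_0^{(0)} C + d_0^{(0)} C$ unless $a = b = c = 0$, and a pure power $g^d$ would need $12d = 2n+4$ and $6d = n+1$ simultaneously, which is impossible. Hence every such $E$ already has the desired ideal form and can be absorbed into a new remainder $r(n+1, i, i')$, closing the induction. The main technical obstacle is the combinatorial identity on the three $\lambda$-coefficients displayed above; this is the same identity implicitly used in the $\gamma$- and $\delta$-proofs, and I would verify it by a direct algebraic manipulation using $i+j = i'+j' = n$.
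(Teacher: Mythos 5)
Your proof follows the paper's argument essentially step for step: the same base case, the same application of $D$ to $b_0^{(i+1)}b_0^{(j)}-b_0^{(i'+1)}b_0^{(j')}$ with the same regrouping and the same $\lambda$-identity (which does hold: with $i+j=i'+j'=n$ one checks both sides equal $-(i-i')(n-1)(n-i-i'-1)$), the same lifting of the remainder terms via the surjectivity of $\Delta\colon C_{N+1}\to x^2C_N$, and the same final step of classifying the residual $D$-closed expression. The only deviations are cosmetic: you replace the paper's mod-$6$ table for the space of invariants of the relevant bidegree with a short divisibility argument ruling out pure $g$-powers, and your bidegree $(2n+4,\,n+1)$ is in fact the correct one (the paper's ``$2n+3$'' is a slip, as its own table confirms) --- a mild streamlining rather than a genuinely different route.
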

 \begin{proof}
 Firstly note that for $n=2$
\[ \left(b_0^{(1)} \right)^2 - b_0^{(2)}b_0^{(0)} = s^2 - 2x^3t = -c_0^{(0)}.  
\]
Now assuming that the result holds for all pairs $i+j =n$, we compute
 \begin{align*}
D \left(b_0^{(i+1)}b_0^{(j)} - b_0^{(i'+1)}b_0^{(j')}\right) = -x^2 \Big(  (n-1)\lambda_{i+1,i'+1}^{n+1} & c_0^{(n-2)} + (i+1) r(n,i,i') \\
& + j r(n,i+1,i'+1) - (i'-i) r(n,i',i'+1) \Big).
\end{align*} 
If either $i=n$ or $i'=n$ we instead obtain either $(i'+1)r(n,n,i') +j'r(n,n,i'+1)$ or $(i+1)r(n,i,n) + jr(n,i+1,n)$ in place of the other $r(n,a,b)$. As before we show that there is some $R(n+1,a,b)$ with $D(R(n+1,a,b)) = r(n,a,b)$ for all $a, b \leq n$. Let $\lambda e_0^{(0)}h$ be a term of $r(n,a,b)$, $\lambda \in \mathbb{K},\, h \in I$ and  $e \in \lbrace b, c, d \rbrace$. Note that the expression $b_0^{(i+1)}b_0^{(j)} - b_0^{(i'+1)}b_0^{(j')}$ is homogeneous of degree $2n+3$ and $\rho$-degree $n+1$. Therefore $h \neq 0$ for $n \geq 2$, and $h \neq g^k$ for some $k \in \mathbb{N}$ as $\deg(e_0^{(0)}) + 12k$ and $\rho(e_0^{(0)}) + 6k$ cannot be $2n+3$ and $n+1$ for any choice of $e_0^{(0)}$ or $k$. Therefore we can write $h = f_0^{(l)}h'$ for some $f \in \lbrace b, c, d \rbrace$ and $l \in \mathbb{N}$, we can then proceed as described in the proof of Lemma \ref{gamma proof}. Now since
\begin{align*} D \Big( b_0^{(i+1)}b_0^{(j)}  - b_0^{(i'+1)}b_{0}^{(j')} & - \lambda_{i+1,i'+1}^{n+1} c_0^{(n-1)}
  - (i+1)R(n+1,i,i')  \\ & - j R(n+1, i+1, i'+1) + (i'-i) R(n+1,i',i'+1) \Big) = 0, 
\end{align*}
this expression is then an invariant of degree $2n+3$ and $\rho$-degree $n+1$.
Considering all such elements we find: 
\[\begin{array}{l|l}
n & S^{\Delta} \cap S_{(2n+3,n+1)} \\ \hline
 6l    & 0 \\
 6l+1  & \lbrace \lambda (2x^3t-s^2)g^l \, \vert \, \lambda \in \mathbb{K} \rbrace \\
 6l+2  & 0 \\
 6l+3  & 0 \\
 6l+4  & 0 \\
 6l+5  &  \lbrace \lambda x^2g^{l+1} \, \vert \, \lambda \in \mathbb{K} \rbrace \\
\end{array}\]
Note that all such elements are of the form $b_0^{(0)}h_1 + c_0^{(0)}h_2 + d_0^{(0)}h_3$, $h_i \in C$, and so if we let
\begin{align*} b_0^{(i+1)}b_0^{(j)}  - b_0^{(i'+1)}b_{0}^{(j')} & - \lambda_{i+1,i'+1}^{n+1} c_0^{(n-1)}
  - (i+1) R(n+1,i,i')  \\ & - j R(n+1, i+1, i'+1) + (i'-i) R(n+1,i',i'+1) = \mu p, 
\end{align*}
 with $\mu \in \mathbb{K}$ and $p \in S^{\Delta} \cap S_{(2n+14, n+5)}$. By setting
 \begin{align*} r(n+1,i,i'): =   \lambda_{i+1,i'+1}^{n+1} & c_0^{(n-1)} - (i+1) R(n+1,i,i') \\ & - j R(n+1, i+1, i'+1) - (i'-i) R(n+1,i',i'+1) - \mu p,  
 \end{align*}
then $r(n+1,i,i')$ is of the form $b_0^{(0)}h_1 + c_0^{(0)}h_2 + d_0^{(0)}h_3$, $h_i \in C$ for $i=1,2,3$, and we obtain the required result.
 \end{proof}
Now as before by using this proof we consider the following expression
 \[ (\beta_n)^2 - \beta_{2n}\beta_{0} - \lambda_{n,2n}^{2n} \gamma_{2n-2} - T(2n, n, 2n), \]
 where $T(n,a,b) \in R^D$ is defined by replacing every $e_0^{(k)}$ by the corresponding $\eta_k \in R^D$ which has $e_0^{(k)}$ as its $v$-degree zero term. From this we see that the expression above has its $v$-degree $0$ term as $0$, and therefore the whole expression must be zero, as the expression is an invariant, and there is no invariant which is divisible by $v$. This means we have
 \[ (\beta_n)^2 = \beta_{2n}\beta_{0} + \lambda_{n,2n}^{2n} \gamma_{2n-2} + T(2n, n, 2n), \]
 with both $\beta_{2n}\beta_{0}, T(2n,n,2n) \in (\beta_0,\gamma_0,\delta_0)R^D$. We then square our expression to obtain
 \[ (\beta_n)^4 = (\lambda_{n,2n}^{2n})^2 (\gamma_{2n-2})^2 + p, \]
 where $p \in (\beta_0,\gamma_0,\delta_0)R^D$. Using our relations for the $\gamma_i$ calculated in Lemma \ref{gamma proof} we then have
 \[ (\beta_n)^4  = (\lambda_{n,2n}^{2n})^2 \left( (\gamma_{2n-2})^2-\gamma_{2n}\gamma_{0} \right) + p + (\lambda_{n,2n}^{2n})^2\gamma_{2n}\gamma_{0}.\]
Each term on the right-hand side is in $(\beta_0,\gamma_0,\delta_0)R^D$ and hence $(\beta_n)^4 \in (\beta_0,\gamma_0,\delta_0)R^D$. This concludes our proof of Proposition \ref{radical proof} and we are finally able to prove Theorem \ref{finite generation ideal}.  

\begin{proof}[Proof of Theorem \ref{finite generation ideal}]%\begin{proof}(Theorem \ref{finite generation ideal}) 
First we remark that $\beta_0, \gamma_0, \delta_0 \in \mathfrak{f}_{R^D}$ since $\mathfrak{pl}(D) \subset \mathfrak{f}_{R^D}$. Indeed, given $d \in \mathfrak{pl}(D)$, with $d=D(p)$ we have $D(\frac{p}{d})=1$ and the morphism
\[ \frac{p}{d}: \mathbb{A}^5_{d} \longrightarrow \mathbb{G}_a \]
is $\mathbb{G}_a$-equivariant. Hence the affine open set $\mathbb{A}^5_d$ is a trivial $\mathbb{G}_a$-bundle, and $\mathbb{A}^5_d/\mathbb{G}_a = \text{Spec}(\mathbb{K}[x,s,t,u,v]_d^{\mathbb{G}_a})$. Thus $\mathbb{K}[x,s,t,u,v]_d^{\mathbb{G}_a} = (\mathbb{K}[x,s,t,u,v]^{\mathbb{G}_a})_d$ is finitely generated.

Additionally, since $\mathfrak{f}_{R^D}$ is a radical ideal by \cite[\S 2.2]{Kem}, we have that $\sqrt{(\beta_0,\gamma_0, \delta_0)R^D} \subset \mathfrak{f}_{R^D}$. Now suppose that $f \in \mathfrak{f}_{R^D}$. Note that $R^D = B_0 + (\beta_n,\gamma_n,\delta_n)_{n\in \mathbb{N}} R^D$, so we may assume that $f \in B_0$. Since $(R^D)_f$ is finitely generated, we therefore have that $(R^D)_f = (B_N)_f$ for some $N \in \mathbb{N}$. Hence there is some $k > 0$ satisfying $f^k\beta_{N+1}, f^k\gamma_{N+1}, f^k \delta_{N+1} \in B_N$ and $f^k \in [B_N \colon B_{N+1}] \cap B_0 = (\beta_0, \gamma_0, \delta_0)B_0 \subset (\beta_0, \gamma_0, \delta_0)R^D$ by Lemma \ref{conductor}. Thus $\mathfrak{f}_{R^D} = \sqrt{(\beta_0,\gamma_0, \delta_0)R^D}$, completing the proof of the first statement of Theorem \ref{finite generation ideal}. 

It remains to show that $\text{L}(\mathcal{G}) \subset \text{L}(\mathfrak{f}_{R^D})$. By Proposition \ref{radical proof} we know that $\mathcal{G} = \lbrace \beta_i, \gamma_i, \delta_i \, \vert \, i \geq 0 \rbrace$ generates the finite generation ideal. Note that the leading monomials of these generators are the $b_n, c_n$ and $d_n$, $n \in \mathbb{N}$ described in the proof of Lemma \ref{SAGBI}. We have shown that applying the relations of these monomials to the corresponding generators yields an element with a leading monomial lying in $L(\mathcal{G})$. Additionally, we have shown that applying the relations between the generators corresponding to these leading monomials and $e$, the leading monomial of $g$, yields an element with leading monomial lying in $L(\mathcal{G})$. Any element in $L(\mathfrak{f}_{R^D})$ is obtained as the leading monomial of some combination of elements in $\mathfrak{f}_{R^D}$ and elements in $R^D$, which is generated by $\mathcal{G} \cup \lbrace g \rbrace$. Since all such combinations yield an element whose leading monomial lies in $L(\mathcal{G})$, we conclude that $\text{L}(\mathcal{G}) \subset \text{L}(\mathfrak{f}_{R^D})$.
\end{proof}

\noindent {\large \textbf{Acknowledgements}}

\smallskip
\noindent This research was supported by EPSRC funding. I would like to thank both Dr Emilie Dufresne and Professor Michael Bate for their help supervising this work as well as the anonymous referees of the prior version of this paper whose feedback greatly helped to improve the exposition.

\printbibliography

\end{document}